\newcommand{\R}{I\!\!R}
\newcommand{\N}{I\!\!N}
\begin{document}
\title{G-coupling functions}
\author{MORALES SILVA D.M.$^1$, RUBINOV A.M. $^2$ and SOSA W.$^3$\\\  \\
$^1$ SITMS, University of Ballarat, \\
Victoria 3353, Australia. E-mail: 2555675@fs2.ballarat.edu.au\\
$^2$ SITMS, University of Ballarat, \\
Victoria 3353, Australia. E-mail: a.rubinov@ballarat.edu.au\\
$^3$ IMCA, Universidad Nacional de Ingenier\'{i}a, \\
Jir\'{o}n Ancash 536, Lima 1, Lima, Per\'{u}, e-mail:
sosa@uni.edu.pe }

\date{March 2006}

\maketitle

\begin{abstract}
GAP functions are useful for solving Optimization Problems, but
the literature contains a variety of different concepts of GAP
functions. It is interesting to point out that these concepts have
many similarities. Here we introduce G-coupling functions, thus
presenting a way to take advantage of these common properties.

\vspace{0.3cm} \noindent{\it Keywords}: General conjugation
theory; non convex optimization; equilibrium problems; gap
functions.

\vspace{0.3cm} \noindent{\it 2000 Mathematics Subject
Classifications}:
\end{abstract}
\section{Introduction}
For solving non-convex optimization problems, a tool that is
becoming more important is {\bf{generalized conjugation}}. In this
paper we introduce a family of coupling functions, the G-coupling
functions, which will allow us to see in a different way duality
schemes. The usual properties found in the literature (\cite{RUB},
\cite{SOSA}) are related to a fixed coupling function, but here we
consider (for a specified function $f$) a family of coupling
functions.

These coupling functions are motivated by gap functions. It is
interesting to point out, that many of these (gap) functions have
similar properties. However, in some cases they are functions of
one vector and it is important, since they are linked to specified
optimization problems, that those functions have zeros.

G-coupling functions will be defined as functions in two variables
and they might not have zeros. Even more, given a specified proper
function $f$, it is shown that a sub-family of this family of
coupling functions satisfies many interesting properties.

In Section 2, we describe how many gap functions have similar
properties, which are useful for the definition of G-coupling
functions.

In Section 3, it is found the definition of G-coupling function
with properties related to generalized conjugation using this
family of functions and a fixed proper function $f$.

In Section 4, it can be seen how these ideas can be applied in the
Equilibrium Problem.

\section{Motivation}
In several works already published, there can be found definitions
of GAP functions for particular problems. Now we present 3
concrete examples.

\noindent In \cite{BUR}, it is consider the following Variational
Inequality Problem:
$$(VIP)\mbox{ Find }x_0\in C,\mbox{ such that, } \exists y^*\in T(x_0)
\mbox{ with }\langle y^*,x-x_0\rangle \geq 0\ \forall x\in C,$$
where $T$ is a maximal monotone correspondence (i.e., $\langle
u-v,x-y \rangle \geq 0$ for every $u\in T(x),\ v\in T(y)$ with
$x,y\ \in C$ and if there exists $v$, such that $\langle u-v,x-y
\rangle \geq 0$, for all $x,y\ \in C$ and for all $u\in T(x)$,
then $v\in T(y)$). It is found as a GAP function the following
one: $$h_{T,C}(x):=\sup_{(v,y)\in G_C(T)}\langle v,x-y\rangle,$$
where $G_C(T)=\{(v,y):\ v\in T(y),\ y\in C\}$ and $C$ is a
non-empty closed convex set. This function happens to be
non-negative and convex, and it is equal to zero only in solutions
of $(VIP)$.

\noindent The theory of the Equilibrium Problem begun with the
paper written by Blum and Oettli \cite{Blum-Oettli}:
$$(EP)\mbox{ Find }x\in K,\mbox{ such that }f(x,y)\geq 0,\ \forall
y\in K,$$ where $K\subset \R^n$ is a non-empty closed convex set
and $f:K\times K\rightarrow \R$ is a function that satisfies:
\begin{enumerate}
\item[i)] $f(x,x)=0$, for all $x\in K$.
\item[ii)] $f(x,\cdot):K\rightarrow \R$ is convex and l.s.c.
\item[iii)] $f(\cdot,y):K\rightarrow \R$ is u.s.c.
\end{enumerate}
The GAP function is defined as (\cite{MOS}, \cite{SOSA}):
$$g_f(y):=\left\{
\begin{array}{cc}
   \displaystyle\sup_{x\in K}f(x,y) & if\ y\in K  \\
   +\infty & \mbox{in other case}.
\end{array}
\right.$$ In this case, the function $g_f$ is non-negative, convex
and l.s.c. and if it vanishes at $x_0$, then $x_0$ is a solution
of $(EP)$.

\noindent In \cite{YANG}, the Extended Pre-Variational Inequality
Problem is considered:
$$(EPVIP)\mbox{ Find }x_0\in \R^n,\mbox{ such that }\langle
F(x_0),\eta(x,x_0)\rangle \geq f(x_0)-f(x),\ \forall x\in \R^n,$$
where $F:\R^n \rightarrow \R^n$, $\eta: \R^n\times \R^n
\rightarrow \R^n$ and $f:\R^n \rightarrow \R\cup\{+\infty\}$. In
this work, the GAP function is
$$\min_{y\in \R^n}[\langle F(x),\eta(y,x)\rangle -f(x)+f(y)],$$ which is
non-positive and it only reaches the value zero in solutions of
$(EPVIP)$.

In all these examples, gap functions are used to transform an
special Equilibrium Problem (for example the VIP is a particular
class of EP) into a minimization problem.

Now our attention is focused in using coupling functions that
could be related, at least in some general aspect, to GAP
functions. Therefore these functions must link both primal and
dual variables. Since these coupling functions must be related to
a sense of ``gap", we consider these functions as non-negative and
with 2 arguments.

Let us remember that for the minimization problem, the convex
conjugation theory allows us to generate a dual problem and there
is implicit another concept of gap function (see
\cite{J.P.SOSA.OC}): consider
$$\alpha =\inf[f(x):x\in \R^n]. \qquad (P)$$ Define a function
$\varphi:\R^n\times \R^p\rightarrow \overline{\R}$, where
$\overline{\R}=\R\cup\{-\infty,+\infty\}$, satisfying
$$\varphi(x,0)=f(x),\ \forall x\in \R^n.$$ Then $\varphi$ will
be called a perturbation function and the function
$h:\R^p\rightarrow \overline{\R}$ defined by
$$h(u)=\inf_{x\in \R^n}\varphi(x,u)$$ will be called the marginal
function. Observe that $$\alpha=h(0)=\inf_{x\in
\R^n}\varphi(x,0)=\inf_{x\in \R^n}f(x).$$ Considering now
$h^{**}$, the convex bi-conjugate (see \cite{ROCK}) of $h$ one
has:
$$h^{**}(0)\leq h(0)=\alpha$$ where $$h^{**}(0)=\sup[\langle
u^*,0\rangle -h^*(u^*): u^*\in \R^p].$$ Then, making
$-\beta=h^{**}(0)$, one has
$$\beta=\inf_{u^*\in \R^p}h^*(u^*).\qquad (Q)$$ $(Q)$ is called dual problem of
$(P)$ and in general we have $-\beta\leq\alpha$. It is said that
there is no duality gap whenever $h^{**}(0)=h(0)$. It is easy to
prove that $h^*(u^*)=\varphi^*(0,u^*)$, and if we define the
function $k:\R^n\rightarrow \overline{\R}$ by $\displaystyle
k(x^*):=\inf_{u^*\in \R^p}\varphi^*(x^*,u^*)$, then $\beta=k(0)$.

\noindent This analysis is summarized in the following scheme:
$$\begin{array}{rclcrcl}
  \alpha & = & \inf f(x) \qquad (P) & & \beta & = & \inf h^*(u^*) \qquad (Q) \\
  \varphi(x,0) & = & f(x),\ \forall x\in \R^n & & \varphi^*(0,u^*) & = & h^*(u^*),\
  \forall u^*\in \R^p \\
  h(u) & = &\displaystyle \inf_x \varphi(x,u) & & k(x^*) & = &\displaystyle
  \inf_{u^*}\varphi^*(x^*,u^*) \\
  \alpha & = & h(0) & & \beta & = & k(0)
\end{array} $$
$$-\beta \leq \alpha.$$
If $h$ is proper and convex, a necessary and sufficient condition
for ensuring that there will be no duality gap ($-\beta =\alpha$)
is that $h$ be l.s.c. at 0 (in general $\varphi$ l.s.c. does not
imply that $h$ would be l.s.c.).

Further more, if $h$ is convex, l.s.c. and $0\in ri(dom(h))$, then
$\alpha=-\beta$ and the dual problem has at least one optimal
solution, and if $\overline{u^*}$ is an optimal solution of $(Q)$
and $\varphi=\varphi^{**}$, then
$$\overline{x} \mbox{ is an optimal solution of }(P)
\Longleftrightarrow f(\overline{x})+h^*(\overline{u^*})=0.$$
Consider now the function $g:\R^n \times \R^p \rightarrow
\overline{\R}$ defined by:
$$g(x,u^*)=f(x)+h^*(u^*).$$ This function vanishes at
$(x_0,u^*_0)$ if and only if $x_0$ solves the primal problem and
$u^*_0$ solves the dual one. In addition, this function is
non-negative and if the first variable is kept fixed, the function
is convex and l.s.c. It is clear now, which properties are
satisfied for many gap functions.

\section{Generalized Conjugation}
\begin{definition}
A non-negative function $g:\R^n \times \R^m \rightarrow
\overline{\R}$ will be called a G-coupling function, if there
exists a non-empty closed set $C\subset \R^m$ such that:
\begin{enumerate}
\item[(D1)] $dom(g)=\R^n\times C$, which means that $g:\R^n
\times C \rightarrow \R$ and $g(x,x^*)=+\infty$ for every
$x^*\notin C$.
\item[(D2)] $\displaystyle \inf_{x\in \R^n,\ x^*\in C}g(x,x^*)=0$.
\end{enumerate}
\end{definition}

Define $\mathcal{F}^{n,m}=\{g:\R^n\times \R^m \rightarrow
\overline{\R} : \ g \mbox{ is a G-coupling function}\}$.

Not every G-coupling function has zeros:\vspace{2ex}

\noindent {\bf Example:} Define on $\R^2$
$$g(x,x^*)=\exp(x+x^*).$$ \verb|   | Then $g\in \mathcal{F}^{1,1}$
($C=\R$) is continuous and it does not have any zeros.

\begin{proposition}\label{pr1}
Let $g\in \mathcal{F}^{n,n}$ and $C\subset \R^n$ be the non-empty
closed set linked to $g$. The following statements hold
\begin{enumerate}
\item[i)] If $\displaystyle \lim_{\|(x,x^*)\|\rightarrow
+\infty}g(x,x^*)=+\infty$, with $(x,x^*)\in \R^n\times C$ and $g$
is l.s.c. then
$$\bigcap_{\varepsilon >0}\frac{}{}(S_g(\varepsilon)=\{(x,x^*)\in \R^n\times C: g(x,x^*)\leq
\varepsilon\}) \neq \emptyset.$$
\item[ii)] If $g$ is lsc on $\R^n\times C$ and there
exists $M>0$ such that $S_g(M)$ is bounded, then
$$\{(x,x^*):g(x,x^*)=0\}=\bigcap_{\varepsilon
>0}S_g(\varepsilon) \neq \emptyset.$$
\end{enumerate}
\end{proposition}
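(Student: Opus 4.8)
The plan is to treat both parts as applications of the finite-intersection property for closed sets, which is the standard route to conclusions of the form $\bigcap_{\varepsilon>0}S_g(\varepsilon)\neq\emptyset$. Throughout, write $X=\R^n\times C$, which is closed in $\R^n\times\R^n$ since $C$ is closed, and note that by lower semicontinuity of $g$ on $X$ each sublevel set $S_g(\varepsilon)=\{(x,x^*)\in X: g(x,x^*)\le\varepsilon\}$ is closed in $X$, hence closed in $\R^n\times\R^n$. By (D2) we have $\inf_X g=0$, so every $S_g(\varepsilon)$ with $\varepsilon>0$ is non-empty, and the family $\{S_g(\varepsilon)\}_{\varepsilon>0}$ is nested and decreasing in $\varepsilon$.

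For part (i), the first step is to upgrade one of these sublevel sets to a compact set using the coercivity hypothesis $\lim_{\|(x,x^*)\|\to+\infty, (x,x^*)\in X} g(x,x^*)=+\infty$: this gives an $R>0$ such that $g(x,x^*)>1$ whenever $(x,x^*)\in X$ and $\|(x,x^*)\|>R$, so $S_g(1)$ is contained in the closed ball of radius $R$ and, being closed, is compact. Then for any $\varepsilon\le 1$ we have $S_g(\varepsilon)\subset S_g(1)$, so the whole tail of the family lives inside the compact set $S_g(1)$; since each $S_g(\varepsilon)$ is non-empty and closed and the family has the finite-intersection property (the intersection of finitely many of them is the one with smallest $\varepsilon$, which is non-empty), compactness of $S_g(1)$ forces $\bigcap_{0<\varepsilon\le 1}S_g(\varepsilon)\neq\emptyset$, and this equals $\bigcap_{\varepsilon>0}S_g(\varepsilon)$ because the sets with $\varepsilon>1$ are supersets and do not shrink the intersection.

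For part (ii), the coercivity hypothesis is replaced by the assumption that $S_g(M)$ is bounded for some $M>0$; since $g$ is l.s.c. this set is also closed, hence compact, and one runs exactly the same finite-intersection argument restricted to $0<\varepsilon\le M$ to conclude $\bigcap_{\varepsilon>0}S_g(\varepsilon)\neq\emptyset$. The remaining task, which is really the only substantive point in (ii), is the set equality $\{(x,x^*):g(x,x^*)=0\}=\bigcap_{\varepsilon>0}S_g(\varepsilon)$. The inclusion $\subseteq$ is immediate since $g(x,x^*)=0\le\varepsilon$ for every $\varepsilon>0$; for $\supseteq$, if $(x,x^*)$ lies in every $S_g(\varepsilon)$ then $g(x,x^*)\le\varepsilon$ for all $\varepsilon>0$, hence $g(x,x^*)\le 0$, and combined with $g\ge 0$ (G-coupling functions are non-negative by definition) we get $g(x,x^*)=0$. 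Note this also shows the zero set is non-empty, which together with (D2) says the infimum in (D2) is attained.

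I expect the main obstacle to be purely a matter of being careful with the hypotheses rather than any deep difficulty: specifically, making sure l.s.c. is invoked on the right ambient space so that the sublevel sets are genuinely closed (not merely closed relative to $X$), and checking that in part (i) the coercivity is stated over $(x,x^*)\in\R^n\times C$ so that points outside $X$ — where $g=+\infty$ by (D1) — do not interfere with either the compactness extraction or the non-emptiness of the sublevel sets. Everything else is the textbook Cantor-type intersection argument for a decreasing family of non-empty compact sets.
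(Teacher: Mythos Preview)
Your proposal is correct and follows essentially the same route as the paper: coercivity (respectively, the boundedness of $S_g(M)$) makes the sublevel sets bounded, lower semicontinuity makes them closed, and then the nested family of non-empty compact sets has non-empty intersection. The paper's own proof is extremely terse---it simply cites \cite{AUS-TEB} for the equivalence between coercivity and boundedness of all level sets and then says ``the statement follows from the lsc of $g$''---so you have written out in full exactly the argument the paper leaves implicit, including the set equality in (ii), which the paper does not spell out at all.
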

\begin{proof}
\begin{enumerate}
\item[i)] It is well known (see \cite{AUS-TEB}) that
$\displaystyle \lim_{\|(x,x^*)\|\rightarrow
+\infty}g(x,x^*)=+\infty$ is equivalent to the fact that all the
level sets $S_g(\varepsilon)$ are bounded. The statement follows
from the lsc of $g$.
\item[ii)] It follows from the lsc of $g$.
\end{enumerate}
\end{proof}

Let us turn our attention now to how the family of functions
$\mathcal{F}^{n,m}$ will allow us to establish duality schemes
for, at least, the minimization problem. It is important to point
out that in the following we consider an unusual type of duality:
$f$ is kept fixed and $g\in \mathcal{F}^{n,m}$, for a given $m\in
\N$, is variable.

Consider a proper function $f:\R^n \rightarrow \R \cup
\{+\infty\}$. For a given $m\in \N$ take $g\in \mathcal{F}^{n,m}$.
Define $f^g:C\rightarrow \R \cup \{+\infty\}$ and
$f^{gg}:\R^n\rightarrow \R\cup\{+\infty\}$ as follows (for example
see \cite{OS}, \cite{RUB} and references therein):
$$f^g(x^*):=\sup_{x\in \R^n}\{g(x,x^*)-f(x)\}\ \forall x^*\in
C,$$ $$f^{gg}(x):=\sup_{x^*\in C}\{g(x,x^*)-f^g(x^*)\}\ \forall
x\in \R^n,$$ where $C$ is the closed set linked to $g$. In some
cases, it would be better to consider a $g\in \mathcal{F}^{n,m}$
which satisfies:
\begin{enumerate}
\item[(D3)] $C$ is convex and $g(x,\cdot):C \rightarrow \R$ is a
convex and l.s.c. function for each $x$ in $\R^n$.
\end{enumerate}
With this, we have the following:

\begin{lemma} Let $f:\R^n \rightarrow \R \cup
\{+\infty\}$ be a proper function and given $m\in \N$. If $g\in
\mathcal{F}^{n,m}$, then
$$f^g(x^*)+f(x)\geq g(x,x^*)\geq 0,\ \forall (x,x^*)\in \R^n\times C,$$ which implies
$$f(x)\geq -f^g(x^*),\ \forall (x,x^*)\in \R^n\times C.$$
Furthermore, if $g$ satisfies $(D3)$, then $f^g$ is a convex l.s.c
function.
\end{lemma}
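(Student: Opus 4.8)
The first inequality is essentially immediate from the definition of $f^g$. By definition,
$$f^g(x^*) = \sup_{x \in \R^n}\{g(x,x^*) - f(x)\},$$
so for any fixed $x \in \R^n$ and $x^* \in C$, the quantity $g(x,x^*) - f(x)$ does not exceed this supremum, i.e. $f^g(x^*) \geq g(x,x^*) - f(x)$. Rearranging yields $f^g(x^*) + f(x) \geq g(x,x^*)$. The inequality $g(x,x^*) \geq 0$ is just the non-negativity built into the definition of a G-coupling function (it is non-negative by Definition, and on $\R^n \times C$ it takes finite values by (D1)). One must check that the rearrangement is legitimate, which requires knowing that we are not subtracting $+\infty$ from $+\infty$; since $f$ is proper, $f(x) > -\infty$ always, and on $\R^n \times C$ we have $g(x,x^*) \in \R$ by (D1); if $f(x) = +\infty$ the inequality $f^g(x^*) + f(x) \geq g(x,x^*)$ holds trivially, and if $f(x) < +\infty$ the rearrangement is a genuine manipulation of real (or extended-real) numbers with no indeterminate form. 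The second displayed inequality, $f(x) \geq -f^g(x^*)$, follows from the first by dropping the non-negative term $g(x,x^*)$ and rearranging; again properness of $f$ and (D1) guarantee no indeterminate $\infty - \infty$ arises.

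For the furthermore clause, I would argue as follows. Assume (D3), so that $C$ is convex and $g(x,\cdot)$ is convex and l.s.c.\ on $C$ for each fixed $x$. For each fixed $x$, the function $x^* \mapsto g(x,x^*) - f(x)$ is convex and l.s.c.\ on $C$ (subtracting the constant $f(x)$, which may be $+\infty$, in which case the function is identically $+\infty$, still convex and l.s.c., or may be a finite real, leaving convexity and lower semicontinuity intact). Then $f^g$ is the pointwise supremum over $x \in \R^n$ of this family of convex l.s.c.\ functions on $C$. It is a standard fact that a pointwise supremum of an arbitrary family of convex functions is convex, and a pointwise supremum of an arbitrary family of l.s.c.\ functions is l.s.c.; hence $f^g$ is convex and l.s.c.\ on $C$. (One should extend $f^g$ to all of $\R^n$ by $+\infty$ outside $C$, consistent with the convention in (D1) applied to $f^g$ via $f^{gg}$; since $C$ is closed and convex, this extension preserves convexity and lower semicontinuity.)

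The main thing to be careful about — the only real obstacle, and it is minor — is the bookkeeping with extended-real values: making sure that whenever $f(x) = +\infty$ the relevant terms behave correctly (the supremum defining $f^g$ is unaffected by such $x$ since $g(x,x^*) - (+\infty) = -\infty$ contributes nothing unless $f \equiv +\infty$, but $f$ is proper so this does not happen), and that the rearrangements never produce $\infty - \infty$. I would handle this by splitting into the cases $f(x) < +\infty$ and $f(x) = +\infty$ explicitly at the start, noting that in the latter case both displayed inequalities are trivial. Beyond that, the proof is a direct unwinding of the definition of $f^g$ together with the standard preservation of convexity and lower semicontinuity under pointwise suprema.
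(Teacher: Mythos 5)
Your proof is correct, and the paper in fact omits any proof of this lemma, treating it as an immediate consequence of the definition of $f^g$ as a supremum together with (D1), (D3), and the stability of convexity and lower semicontinuity under pointwise suprema --- exactly the argument you give. One small slip: when $f(x)=+\infty$ the function $x^*\mapsto g(x,x^*)-f(x)$ is identically $-\infty$ (not $+\infty$), since $g(x,x^*)$ is finite on $\R^n\times C$; this does not affect your conclusion (the constant $-\infty$ function is still convex and l.s.c.\ and contributes nothing to the supremum, as you correctly note in your final paragraph), but the two passages should be made consistent.
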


Unless it is mentioned, not every $g\in \mathcal{F}^{n,m}$
satisfies $(D3)$.

It would be interesting to know which condition either a
G-coupling function $g$ or the function $f$ must satisfy in order
that the function $f^g$ be proper, because with this one would
have a non-trivial function related to $f$. The following lemma
ensures the existence of such a function $g\in \mathcal{F}^{n,m}$
for any $m\in \N$, taking as a starting point a natural condition
on $f$ which must be imposed if $f$ is the objective function of a
minimization problem.

\begin{lemma} Let $f$ be as before. Then $f$ is bounded from below
if and only if, for every $m\in \N$, there exists $g\in
\mathcal{F}^{n,m}$ such that $f^g$ is proper.
\end{lemma}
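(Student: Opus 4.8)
The plan is to prove the equivalence in two directions.

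\textbf{($\Leftarrow$) The easy direction.} Suppose that for every $m \in \N$ there exists $g \in \mathcal{F}^{n,m}$ with $f^g$ proper. Fix one such $g$ (say with $m = 1$) and pick $x^* \in C$ with $f^g(x^*) < +\infty$, which exists because $f^g$ is proper. By the previous lemma, $f(x) \geq -f^g(x^*)$ for all $x \in \R^n$, so $-f^g(x^*)$ is a finite lower bound for $f$. Hence $f$ is bounded from below.

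\textbf{($\Rightarrow$) The main direction.} Suppose $f$ is bounded from below, say $f(x) \geq \gamma$ for all $x$, and fix $m \in \N$. I need to construct a G-coupling function $g \in \mathcal{F}^{n,m}$ for which $f^g$ is proper, i.e. $f^g \not\equiv +\infty$ and $f^g(x^*) > -\infty$ for all $x^* \in C$. The idea is to choose $g$ so that it decouples the two variables additively: try $g(x,x^*) = a(x) + b(x^*)$ on $\R^n \times C$ for a suitable closed set $C$ and suitable nonnegative functions $a, b$ with $\inf a + \inf b = 0$. With such a $g$ one computes $f^g(x^*) = b(x^*) + \sup_{x}\{a(x) - f(x)\}$. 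So I want $\sup_x\{a(x) - f(x)\}$ to be finite: since $f \geq \gamma$, taking $a \equiv 0$ (or any bounded function) gives $\sup_x\{a(x) - f(x)\} \leq -\gamma < +\infty$, and it is $> -\infty$ as long as $\mathrm{dom}(f) \neq \emptyset$, which holds since $f$ is proper. Concretely, take $C = \{0\} \subset \R^m$ (a nonempty closed set), $a \equiv 0$, $b \equiv 0$; then $g(x,x^*) = 0$ on $\R^n \times \{0\}$ and $+\infty$ elsewhere. One checks directly that (D1) holds with this $C$, and (D2) holds since $\inf g = 0$; so $g \in \mathcal{F}^{n,m}$. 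Then $f^g(0) = \sup_x\{0 - f(x)\} = -\inf_x f(x) \in \R$ because $f$ is proper and bounded below, and $\mathrm{dom}(f^g) = C = \{0\} \neq \emptyset$. Hence $f^g$ is proper. Since $m$ was arbitrary, this finishes the direction.

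\textbf{Where the work lies.} The argument above is essentially bookkeeping, so the only real ``obstacle'' is making sure the chosen $g$ genuinely lies in $\mathcal{F}^{n,m}$ — in particular that the infimum condition (D2) is exactly $0$ and not merely $\geq 0$, and that $g$ is $+\infty$ precisely off $\R^n \times C$. With the trivial choice $C = \{0\}$, $g \equiv 0$ on $\R^n \times C$, both are immediate. One might prefer a less degenerate example (e.g. $C = \R^m$ and $g(x,x^*) = e^{-|x^*|^2}\,($something$)$ mimicking the introduction's example), but for the mere existence claim the trivial construction suffices, and I would present that for brevity, perhaps remarking that (D3) is also satisfied by this $g$ since constant functions are convex and l.s.c.
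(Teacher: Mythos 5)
Your proof is correct, and it diverges from the paper's in an instructive way. For the direction ``existence of $g$ with $f^g$ proper $\Rightarrow$ $f$ bounded below'', the paper argues by contradiction: assuming $\inf f=-\infty$ it passes to the second conjugate, uses $\inf f^{gg}\leq\inf f$ and a sup--inf interchange to conclude $\inf_{x^*\in C}f^g(x^*)=+\infty$, contradicting properness. You instead read off the weak-duality inequality $f(x)\geq -f^g(x^*)$ from the immediately preceding lemma and evaluate it at any $x^*$ where $f^g$ is finite; this is a direct, shorter argument that avoids $f^{gg}$ and the external reference to \cite{SOSA} entirely, and it is perfectly legitimate since that lemma precedes the statement. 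For the converse, both you and the paper give an explicit construction: the paper takes $C=\R^{m}$ and $g(x,x^*)=\|x^*\|$ on $dom(f)\times\R^m$ (yielding $f^g(x^*)=\|x^*\|-\inf f$), while you take the degenerate $C=\{0\}$ with $g\equiv 0$ on $\R^n\times C$ (yielding $f^g(0)=-\inf f$). Both choices verify (D1), (D2) and (D3); the paper's is less degenerate (full-dimensional $C$, and it is reused later as the witness that $\mathcal{F}^{n,m}_f\neq\emptyset$), but for the bare existence claim yours suffices. Your remark that $f^g$ never takes $-\infty$ (because $f$ is proper and $g\geq 0$) correctly justifies reducing properness of $f^g$ to $f^g\not\equiv+\infty$. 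No gaps.
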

\begin{proof}
\begin{enumerate}\item[$\bullet$] Suppose that $\inf
f>-\infty$, then for a $m_0\in \N$ fixed, consider $g\in
\mathcal{F}^{n,m_0}$ as follows:
$$\displaystyle g(x,x^*)=\left\{\begin{array}{cl}
  \|x^*\| & \mbox{if }x\in dom(f) \\
  0 & \mbox{if }x\notin dom(f),
\end{array}\right.$$ ($C=\R^{m_0}$) thus
$$f^{g}(x^*)=\|x^*\|-\inf f\ \forall x^*\in \R^{m_0},$$
which is clearly a proper function and since $m_0\in \N$ was fixed
arbitrarily, the result is satisfied for every $m\in \N$.
\item[$\bullet$] Take $m_0\in \N$ and $g\in
\mathcal{F}^{n,m_0}$ such that $f^g$ is proper. Let us suppose
that $\inf f=-\infty$, from \cite{SOSA} we can see that this
implies that $\inf f^{gg}=-\infty$. Then:
$$-\infty=\inf f^{gg}=\inf_{x\in \R^n} \left(\sup_{x^*\in
C}[g(x,x^*)-f^g(x^*)]\right)\geq$$ $$\sup_{x^*\in C }\left(
\inf_{x\in \R^n} [g(x,x^*)-f^g(x^*)] \right)\geq \sup_{x^*\in C
}(-f^g(x^*)) =-\inf_{x^*\in C}f^g(x^*),$$ which means
$\displaystyle -\infty\geq -\inf_{x^*\in C}(f^g(x^*))$. Then
$\displaystyle \inf_{x^*\in C}f^g(x^*)=+\infty$, which implies
that $f^g$ is not proper and we have a contradiction. Therefore,
$\inf f>-\infty$.
\end{enumerate}
\end{proof}

Notice that this proof also states, in particular, that there
exists $g\in \mathcal{F}^{n,m}$ for every $m\in \N$ which
satisfies $(D3)$ and $f^g$ is proper.

Henceforth, consider only functions $f$ such that $\inf f>-\infty$
and for some fixed $m\in \N$, $g\in \mathcal{F}^{n,m}$ will be
such that $f^g$ is proper.

Let it be $\mathcal{F}^{n}=\{f:\R^n\rightarrow \R\cup\{+\infty\},\
f\mbox{ is proper, } \inf f>-\infty\}$ and\\
$\gamma_{g,f}:\R^n\times \R^m\rightarrow \R\cup\{+\infty\}$
defined by:
$$\gamma_{g,f}(x,x^*)=\left\{\begin{array}{cl}
  f(x)+f^g(x^*) & \mbox{if }x^*\in C \\
  +\infty & \mbox{otherwise.}
\end{array}\right.$$ with $m\in \N$, $g\in \mathcal{F}^{n,m}$
and $f\in \mathcal{F}^n$. Define
$$\mathcal{F}_f^{n,m}:=\{g\in \mathcal{F}^{n,m}/ f^g \mbox{ is
proper and }\inf \gamma_{g,f}=0\},$$ observe that $\gamma_{g,f}$
might not be in $\mathcal{F}^{n,m}$, since $\gamma_{g,f}$ can take
the value $+\infty$ for some $(x,x^*)\in \R^n\times C$.

\begin{lemma}
$\mathcal{F}_f^{n,m}$ is non-empty for all $m\in \N$.
\end{lemma}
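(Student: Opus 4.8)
The plan is to recycle the explicit G-coupling function built in the proof of the previous lemma. Fix $m\in\N$. Since $f\in\mathcal{F}^n$ we have $\inf f>-\infty$, so $\inf f$ is a finite real number, and we may set
$$g(x,x^*)=\left\{\begin{array}{cl} \|x^*\| & \mbox{if }x\in dom(f)\\ 0 & \mbox{if }x\notin dom(f),\end{array}\right.$$
with $C=\R^m$. By the previous lemma and its proof we already know that $g\in\mathcal{F}^{n,m}$ and that $f^g(x^*)=\|x^*\|-\inf f$ for every $x^*\in\R^m$, a finite, convex, l.s.c.\ (hence proper) function. So the only thing still to verify is the normalization $\inf\gamma_{g,f}=0$.

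For this $g$ the set $C$ is all of $\R^m$, so $\gamma_{g,f}(x,x^*)=f(x)+f^g(x^*)=f(x)+\|x^*\|-\inf f$ holds on all of $\R^n\times\R^m$. The two variables separate, so the infimum factors, and using $\inf_{x^*\in\R^m}\|x^*\|=0$ (attained at $x^*=0$) together with $\inf_{x\in\R^n}f(x)=\inf f$ one gets
$$\inf_{(x,x^*)}\gamma_{g,f}(x,x^*)=\inf_{x}f(x)+\inf_{x^*}\|x^*\|-\inf f=\inf f+0-\inf f=0.$$
Hence $g\in\mathcal{F}_f^{n,m}$, and since $m\in\N$ was arbitrary this gives $\mathcal{F}_f^{n,m}\neq\emptyset$ for every $m\in\N$.

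I do not expect any genuine obstacle here: the substantive construction was already done for the previous lemma, and the separable form of $\gamma_{g,f}$ makes the remaining computation immediate. The only points worth a line of care are that the hypothesis $\inf f>-\infty$ is exactly what keeps $f^g$ real-valued and makes subtracting $\inf f$ legitimate, and that it is worth recording that this same $g$ also satisfies $(D3)$ (the set $\R^m$ is convex and, for each fixed $x$, $g(x,\cdot)$ is either a norm or the zero function, hence convex and l.s.c.). Thus $\mathcal{F}_f^{n,m}$ in fact contains a G-coupling function enjoying both $(D3)$ and properness of $f^g$, consistent with the remark made right after the previous lemma.
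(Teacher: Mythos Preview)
Your proof is correct and follows exactly the same approach as the paper: the paper uses the identical function $g$ with $C=\R^m$, states that it is easy to check $g\in\mathcal{F}_f^{n,m}$, and makes the same parenthetical remark about $(D3)$. You have simply written out the verification of $\inf\gamma_{g,f}=0$ that the paper leaves to the reader.
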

\begin{proof}
Given $m\in \N$, define $g\in \mathcal{F}^{n,m}$ by:
$$\displaystyle g(x,x^*)=\left\{\begin{array}{cl}
  \|x^*\| & \mbox{if }x\in dom(f) \\
  0 & \mbox{if }x\notin dom(f).
\end{array}\right.$$ It is easy to check that $g$ belongs to
$\mathcal{F}_f^{n,m}$ with $C=\R^m$ (this example also proves that
functions can be found in $\mathcal{F}_f^{n,m}$ which satisfy
$(D3)$).
\end{proof}

Now consider $$(P)\ \min_x f(x)$$ with $f\in \mathcal{F}^n$.
Taking $g\in \mathcal{F}_f^{n,m}$, define the dual problem related
to $g$:
$$(D_g)\ \min_{x^*\in C }f^g(x^*).$$ Since
$$\inf_{(x,x^*)\in \R^n\times C}\gamma_{g,f}(x,x^*)=\inf_{x\in
\R^n} f(x)+\inf_{x^*\in C}f^g(x^*)=0$$
$$\Longrightarrow \inf_{x\in \R^n} f(x)=-\inf_{x^*\in
C}f^g(x^*)=\sup_{x^*\in C}[-f^g(x^*)].$$ This means that there is
no duality gap between the primal problem $(P)$ and its dual
$(D_g)$ for every $g\in \mathcal{F}_f^{n,m}$.\\
\\ The next theorem states that given $n,m\in \N$, the
correspondence defined by
$$\begin{array}{cccc}
  \mathbf{F}: & \mathcal{F}^n & \rightrightarrows &
  \mathcal{F}^{n,m}\\
   & f & \mapsto & \mathbf{F}(f)=\mathcal{F}_f^{n,m},
\end{array}$$ is a closed correspondence (see \cite{ZANG}).

\begin{theorem}
Take $f\in \mathcal{F}^n$ and $C\subset \R^m$ the non-empty closed
convex set. If there exist $f_k:dom(f)\rightarrow \R$, $g_k:\R^n
\times C\rightarrow \R$, sequences of functions ($k\in \N$), such
that $f_k$ converges uniformly to $f$ (in $dom(f)$), $g_k\in
\mathcal{F}_{f_k}^{n,m}$ satisfies $(D3)$ for every $k\in \N$ and
$g_k$ converges uniformly to a function $g$ (in $\R^n\times C$),
then $g\in \mathcal{F}_f^{n,m}$ and it satisfies $(D3)$ (extend
$g$ to $\R^n\times \R^m$ by only taking $g(x,x^*)=+\infty$
whenever $x^*\notin C$).
\end{theorem}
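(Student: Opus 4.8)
The plan is to verify each of the defining conditions for $g \in \mathcal{F}_f^{n,m}$ in turn, exploiting the stability of all the relevant inequalities under uniform limits. First I would check that $g$ is a G-coupling function with the same set $C$: non-negativity of $g$ follows from non-negativity of each $g_k$ and pointwise convergence; condition (D1) holds by the stipulated extension of $g$ by $+\infty$ off $\R^n \times C$; and condition (D3) (convexity and lower semicontinuity of $g(x,\cdot)$ on the convex set $C$) passes to the uniform limit because a uniform limit of convex functions is convex and a uniform limit of l.s.c.\ functions is l.s.c. The remaining piece of the G-coupling definition, namely (D2) that $\inf_{(x,x^*)\in \R^n\times C} g(x,x^*)=0$, I would get from uniform convergence: $|\inf g - \inf g_k| \le \sup |g - g_k| \to 0$, and each $\inf g_k = 0$.

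Next I would handle the two conditions that involve the conjugate, $f^g$ proper and $\inf \gamma_{g,f}=0$. The key technical observation is that $f_k \to f$ uniformly on $\operatorname{dom}(f)$ and $g_k \to g$ uniformly on $\R^n \times C$ together imply that the "bivariate" function $g_k(x,x^*)-f_k(x)$ converges to $g(x,x^*)-f(x)$ uniformly on $\operatorname{dom}(f)\times C$; hence for each fixed $x^*$,
\[
f^{g}(x^*) = \sup_{x} \{g(x,x^*)-f(x)\} = \lim_{k} \sup_{x}\{g_k(x,x^*)-f_k(x)\} = \lim_k f_k^{g_k}(x^*),
\]
and in fact this convergence $f_k^{g_k} \to f^g$ is itself uniform on $C$ (a supremum of a uniformly convergent family converges uniformly). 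The same reasoning applied to $\gamma_{g_k,f_k}(x,x^*) = f_k(x) + f_k^{g_k}(x^*)$ gives $\gamma_{g_k,f_k} \to \gamma_{g,f}$ uniformly on $\operatorname{dom}(f)\times C$, so $\inf \gamma_{g,f} = \lim_k \inf \gamma_{g_k,f_k} = 0$. Since $\inf \gamma_{g,f} = 0$ with $f$ proper (so $\operatorname{dom}(f)\ne\emptyset$ and $f>-\infty$ by $f\in\mathcal{F}^n$), the decomposition $\inf \gamma_{g,f} = \inf f + \inf_{x^*\in C} f^g(x^*) = 0$ forces $\inf_{x^*\in C} f^g(x^*) = -\inf f$ to be finite, hence $f^g \not\equiv +\infty$; combined with the earlier Lemma ($f^g(x^*)+f(x)\ge g(x,x^*)\ge 0$, giving $f^g \ge -f(x) > -\infty$ for any $x\in\operatorname{dom}(f)$), this shows $f^g$ is proper. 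Finally, $f^g$ convex and l.s.c.\ again follows from $(D3)$ via the earlier Lemma, or directly as a uniform limit of the convex l.s.c.\ functions $f_k^{g_k}$.

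The main obstacle I expect is the bookkeeping around domains and the $\pm\infty$ values: $f_k$ is only given as a real-valued function on $\operatorname{dom}(f)$, so one must be careful that "$f_k \to f$ uniformly on $\operatorname{dom}(f)$" is the right hypothesis to make the sup-defining-$f^g$ interchange with the limit, and that taking suprema of functions that may be $+\infty$-valued (here they are finite on the relevant sets, since $g_k$ is real-valued on $\R^n\times C$ and $f_k$ real-valued on $\operatorname{dom}(f)$) does not destroy uniformity. A second, smaller subtlety is the direction of the $\inf/\sup$ interchange used to conclude properness: one should use the clean decomposition $\inf_{(x,x^*)}\gamma_{g,f} = \inf_x f(x) + \inf_{x^*} f^g(x^*)$ rather than a minimax inequality, which is valid precisely because $\gamma_{g,f}$ separates additively in its two variables on $\R^n\times C$. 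Everything else is the routine "uniform limit preserves convexity, lower semicontinuity, and the value of infima and suprema" toolkit.
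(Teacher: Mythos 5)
Your proposal is correct and follows essentially the same route as the paper's proof: establish (D2) and (D3) for $g$ by passing non-negativity, infima, convexity and lower semicontinuity through the uniform limit, then show $f_k^{g_k}\to f^g$ uniformly on $C$ (the supremum of a uniformly convergent family) and deduce $\inf\gamma_{g,f}=0$ and the properness of $f^g$. The only differences are cosmetic: you cite the general "uniform limits preserve l.s.c./suprema" facts where the paper writes out the $\varepsilon$-arguments, and your properness argument via the additive decomposition of $\inf\gamma_{g,f}$ is slightly more explicit than the paper's.
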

\begin{proof}
Let us prove first that $g\in \mathcal{F}^{n,m}$. Since $g_k$
converges uniformly to $g$, given $\varepsilon>0$, there exists
$N\in \N$ such that if $k\geq N$ then
$$|g_k(x,x^*)-g(x,x^*)|< \varepsilon,\qquad \forall (x,x^*)\in
\R^n\times C.$$
$$\mbox{Hence }
g_k(x,x^*)-\varepsilon<g(x,x^*)<g_k(x,x^*)+\varepsilon,\ \forall
(x,x^*)\in \R^n\times C.$$ Taking $\displaystyle \inf_{x,x^*\in
C}$ (remember that $\inf g_k =0$ for all $k\in \N$):
$$-\varepsilon<\inf_{x,x^*\in C}g(x,x^*)< \varepsilon.$$ Then $|\inf
g|<\varepsilon$. And since $\varepsilon>0$ is arbitrary, one has
that $\inf g=0$.

Now we prove that $g(x,\cdot):C\rightarrow \R$ is convex and
l.s.c. for all $x\in \R^n$.\begin{enumerate}
\item[$\bullet$] $g(x,\cdot)$ is convex: let $x\in \R^n$ be
fixed arbitrarily. Since for all $k\in \N$, $g_k(x,\cdot)$ is
convex, one has that given $x_1^*,x_2^*\in C$ and $t\in [0,1]$:
$$g_k(x,tx_1^* + (1-t)x_2^*)\leq
tg_k(x,x_1^*)+(1-t)g_k(x,x_2^*).$$ Making $k\rightarrow +\infty$:
$$g(x,tx_1^* + (1-t)x_2^*)\leq tg(x,x_1^*)+(1-t)g(x,x_2^*),$$
which proves that $g(x,\cdot)$ is convex.\item[$\bullet$]
$g(x,\cdot)$ is l.s.c.: take $\lambda <g(x,x^*)$. There exists
$N\in \N$ such that
$$|g_N(y,y^*)-g(y,y^*)|<\varepsilon,\ \forall (y,y^*)\in
\R^n\times C,$$ where $\displaystyle \varepsilon=
\frac{g(x,x^*)-\lambda}{2}$.
$$\mbox{Hence } \lambda< \lambda + \varepsilon=g(x,x^*)-\varepsilon<g_N(x,x^*).$$
Since $g_N(x,\cdot)$ is l.s.c., then there exists $V(x^*)\subset
C$, a neighborhood of $x^*$, such that if $y^*\in V(x^*)$ then
$$\lambda+\varepsilon<g_N(x,y^*).$$ Reducing $g(x,y^*)$:
$$\lambda+\varepsilon-g(x,y^*)<g_N(x,y^*)- g(x,y^*)<\varepsilon.$$
Therefore, if $y^*\in V(x^*)$, then $\lambda<g(x,y^*)$. Thus
$g(x,\cdot)$ is l.s.c. for all $x\in \R^n$.\end{enumerate} This
proves that $g\in \mathcal{F}^{n,m}$.

It remains to prove that $g\in \mathcal{F}_f^{n,m}$. For doing
this, let us show that $(f_k^{g_k})_{k\in \N}$ converges uniformly
to $f^g$.

Let $\varepsilon>0$ and $N\in \N$ such that if $k\geq N$ then
$$|g_k(x,x^*)-g(x,x^*)|<\frac{\varepsilon}{4},\ \forall (x,x^*)\in \R^n\times C \mbox{
and }$$
$$|f_k(x)-f(x)|<\frac{\varepsilon}{4},\ \forall x\in \R^n.$$
Fix $k\geq N$ and take $x^*\in C$ arbitrarily, then
$$f_k^{g_k}(x^*)-\frac{\varepsilon}{2} < g_k(x',x^*)-f_k(x'), \mbox{
for some }x'\in \R^n.$$
$$\mbox{Hence } f_k^{g_k}(x^*)-\varepsilon<g_k(x',x^*)-f_k(x')
-\frac{\varepsilon}{2}<g(x',x^*)-f(x')\leq f^g(x^*),$$
$$\mbox{and so } f_k^{g_k}(x^*)-\varepsilon<f^g(x^*).$$
This proves that $f_k^{g_k}(x^*)-f^g(x^*)<\varepsilon.$ On the
other hand:
$$f^g(x^*)-\frac{\varepsilon}{2}<g(x'',x^*)-f(x''),\ \mbox{for some }x''\in \R^n,$$
$$\mbox{whence } f^g(x^*)-\varepsilon
<g(x'',x^*)-f(x')-\frac{\varepsilon}{2}<g_k(x'',x^*)-f_k(x''),$$
$$\mbox{and so } f^g(x^*)-\varepsilon< f_k^{g_k}(x^*).$$ With this,
it is shown that $-\varepsilon<f_k^{g_k}(x^*)-f^g(x^*)$ for an
arbitrary $x^*\in C$. Therefore $(f_k^{g_k})_{k\in \N}$ converges
uniformly to $f^g$ (in $C$), and it is immediate to see that $f^g$
is proper and
$$0\leq f(x)+f^g(x^*)\leq f_k(x)+f_k^{g_k}(x^*)+\varepsilon,\ \forall
x\in dom(f),\ x^*\in C$$ where $\varepsilon>0$ is arbitrary and
$k$ is large enough. Taking $\displaystyle\inf_{(x,x^*)\in
\R^n\times C}$ one has:
$$0\leq \inf_{(x,x^*)\in
\R^n\times C}(f(x)+f^g(x^*))\leq \varepsilon.$$ Therefore
$\displaystyle\inf_{(x,x^*)\in \R^n\times C}(f(x)+f^g(x^*))=0$ and
$g\in \mathcal{F}_f^{n,m}.$
\end{proof}

This theorem proves a more difficult situation, the case when
$g_k\in \mathcal{F}_{f_k}^{n,m}$ satisfy $(D3)$ for all $k\in \N$.
For the general case, just omit the two $\bullet$ items and change
$C$ for the non-empty closed set linked to $g$.

Consider now $g\in \mathcal{F}_f^{n,m}$. Define $l:\R^m
\rightarrow \R \cup \{+\infty\}$ as follows:
\begin{equation}\label{equa-l}l(x^*)=\left\{\begin{array}{cl}
  f^g(x^*) & \mbox{if }x^*\in C \\
  +\infty & \mbox{if }x^*\notin C.
\end{array}\right.\end{equation}

\begin{lemma}
Let $g\in \mathcal{F}_f^{n,m}$ satisfy $(D3)$, $l$ be defined like
in (\ref{equa-l}) and $l^*(x)=\displaystyle \sup_{x^*\in
\R^m}\{\langle x^*,x \rangle - l(x^*)\}$, if $0\in ri(dom (l^*))$.
Then the following statements are equivalent:
\begin{enumerate}
\item[i)]$\partial l^*(0)\neq \emptyset.$
\item[ii)] $(D_g)$ has a solution.
\end{enumerate}
\end{lemma}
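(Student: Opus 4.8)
The plan is to reformulate $(D_g)$ as the unconstrained minimization of $l$ on $\R^m$ and then recognize that $-\inf_{x^*} l(x^*)$ is exactly $l^{**}(0)$, the value of the biconjugate at the origin. First I would note that, since $g$ satisfies $(D3)$ and $C$ is closed and convex, the function $f^g$ is convex and l.s.c.\ on $C$ by the Lemma on $f^g$, so $l$ is a proper convex l.s.c.\ function on $\R^m$ (proper because $g\in\mathcal{F}_f^{n,m}$ forces $f^g$ to be proper, and $\inf\gamma_{g,f}=0$ together with $\inf f>-\infty$ keeps $l$ bounded below). Then $l=l^{**}$, and by definition of the conjugate $l^*(x)=\sup_{x^*}\{\langle x^*,x\rangle-l(x^*)\}$, whence $l^*(0)=\sup_{x^*}\{-l(x^*)\}=-\inf_{x^*\in C}f^g(x^*)$; in particular the optimal value of $(D_g)$ equals $-l^*(0)$.

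The key identity I would use is the standard subdifferential inversion: for a proper convex l.s.c.\ function $l$ one has $x^*\in\partial l^*(x)\iff x\in\partial l(x^*)\iff \langle x^*,x\rangle = l(x^*)+l^*(x)$. Specializing to $x=0$: $x^*\in\partial l^*(0)\iff 0\in\partial l(x^*)\iff l(x^*)+l^*(0)=0$. But $0\in\partial l(x^*)$ says precisely that $x^*$ is a global minimizer of $l$, i.e.\ $x^*\in C$ and $f^g(x^*)=\inf_{y^*\in C}f^g(y^*)$, which is exactly the assertion that $x^*$ solves $(D_g)$. Reading the chain from left to right gives (i)$\Rightarrow$(ii), and from right to left gives (ii)$\Rightarrow$(i). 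The hypothesis $0\in ri(dom(l^*))$ is what guarantees $\partial l^*(0)$ is the right object to talk about and that this inversion formula is available without pathology — concretely it ensures $l^*$ is subdifferentiable at $0$ precisely when the infimum defining $l^{**}(0)=l(\cdot)$'s conjugate value is attained, so that no spurious gap between ``$\partial l^*(0)\ne\emptyset$'' and ``the minimum is attained'' appears.

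Concretely the steps are: (1) verify $l$ is proper convex l.s.c.; (2) compute $l^*(0)=-\inf(D_g)$ and record that $(D_g)$'s value is finite; (3) invoke $l=l^{**}$ and the Fenchel--Young equality characterization of $\partial l^*(0)$; (4) translate $0\in\partial l(x^*)$ into ``$x^*$ minimizes $l$ over $C$'', i.e.\ solves $(D_g)$; (5) assemble the equivalence. I expect the only delicate point to be step (3)--(4): one must be careful that $\partial l^*(0)\ne\emptyset$ genuinely forces attainment of $\inf l$ and not merely $l^{**}(0)=\inf l$; this is where $0\in ri(dom(l^*))$ (equivalently, a constraint-qualification/relative-interior condition from \cite{ROCK}) does the work, since for a closed proper convex function the relative-interior condition on the domain of the conjugate yields a nonempty, compact subdifferential set and hence an exact subgradient that is a minimizer of $l$. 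Everything else is bookkeeping with the conjugacy definitions already set up in the paper.
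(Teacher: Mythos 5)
Your proof is correct and follows essentially the same route as the paper's: both use that $(D3)$ makes $l$ proper convex l.s.c.\ so that $l=l^{**}$, characterize $\partial l^*(0)$ via the Fenchel--Young equality $l^*(0)+l(\overline{x}^*)=0$, and then use the finiteness of $\inf_{x^*\in C}f^g(x^*)=-\inf f$ to force $\overline{x}^*\in C$ and identify it as a minimizer of $f^g$, i.e.\ a solution of $(D_g)$. The only cosmetic difference is that you pass through the subdifferential inversion $x^*\in\partial l^*(0)\iff 0\in\partial l(x^*)$, which is equivalent to the paper's direct use of the conjugacy identity.
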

\begin{proof}
$$\partial l^*(0)\neq \emptyset \Longleftrightarrow
\exists \overline{x}^* \mbox{ such that }
l^*(0)+l^{**}(\overline{x}^*)=l^*(0)+l(\overline{x}^*)=0.$$ This
is equivalent to
$$l(\overline{x}^*)=-l^*(0)=-\sup_{x^*\in \R^m}\{-l(x^*)\}=
\inf_{x^*\in \R^m}l(x^*)=\inf_{x^*\in C}f^g(x^*),$$ but
$\displaystyle \inf_{x^*\in C}f^g(x^*)=-\inf f\neq +\infty$ then
$l(\overline{x}^*)\neq +\infty$, therefore $\overline{x}^*\in C$
and $$l(\overline{x}^*)=f^g(\overline{x}^*)=\inf_{x^*\in
C}f^g(x^*).$$
\end{proof}

\begin{theorem}
$\overline{x}^*$ is a solution of $(D_g)$ and $\overline{x}$ is a
solution of $(P)$ if and only if
$\gamma_{g,f}(\overline{x},\overline{x}^*)=0$.
\end{theorem}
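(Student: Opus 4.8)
The plan is to read everything off the two facts already in hand: the weak duality inequality from the Lemma ($f(x)+f^g(x^*)\ge g(x,x^*)\ge 0$, hence $f(x)\ge -f^g(x^*)$ on $\R^n\times C$) and the no-duality-gap identity established immediately before the theorem, namely $\inf_{x\in\R^n}f(x)=-\inf_{x^*\in C}f^g(x^*)$. Write $\alpha=\inf_{x\in\R^n}f(x)$ and $\beta=\inf_{x^*\in C}f^g(x^*)$; since $f\in\mathcal{F}^n$ and $g\in\mathcal{F}_f^{n,m}$ (so $f^g$ is proper and $\inf\gamma_{g,f}=0$), both $\alpha$ and $\beta$ are finite and $\alpha+\beta=0$. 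This finiteness is what makes the equation $\gamma_{g,f}(\overline{x},\overline{x}^*)=0$ meaningful, and it is the only bookkeeping point one has to keep in mind.

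For the implication ($\Leftarrow$), I would assume $\gamma_{g,f}(\overline{x},\overline{x}^*)=0$. By the definition of $\gamma_{g,f}$ this forces $\overline{x}^*\in C$ and $f(\overline{x})+f^g(\overline{x}^*)=0$ with both terms finite. Applying the Lemma with the second variable fixed at $\overline{x}^*$ gives $f(x)\ge -f^g(\overline{x}^*)=f(\overline{x})$ for every $x\in\R^n$, so $\overline{x}$ solves $(P)$; applying it with the first variable fixed at $\overline{x}$ gives $f^g(x^*)\ge -f(\overline{x})=f^g(\overline{x}^*)$ for every $x^*\in C$, so $\overline{x}^*$ solves $(D_g)$.

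For the converse ($\Rightarrow$), I would assume $\overline{x}$ solves $(P)$ and $\overline{x}^*\in C$ solves $(D_g)$, so that $f(\overline{x})=\alpha$ and $f^g(\overline{x}^*)=\beta$. Then $\gamma_{g,f}(\overline{x},\overline{x}^*)=f(\overline{x})+f^g(\overline{x}^*)=\alpha+\beta=0$ by the no-duality-gap identity. I do not expect a genuine obstacle here: all the substantive content (strong duality for the pair $(P)$, $(D_g)$, valid because $g\in\mathcal{F}_f^{n,m}$) has already been proved, and the remaining argument is just pairing the weak and strong duality relations; the only thing to watch is that one never adds $+\infty$ to $-\infty$, which is ruled out by properness of $f$ and $f^g$.
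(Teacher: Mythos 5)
Your proof is correct and follows essentially the same route as the paper, which simply chains $f(\overline{x})=\inf f=-\inf f^g=-f^g(\overline{x}^*)$ using the no-duality-gap identity established just before the theorem. Your ($\Leftarrow$) direction substitutes the weak-duality inequality from the Lemma for the strong-duality identity, but this is only a cosmetic variation on the same one-line argument, and your attention to the finiteness of $\inf f$ and $\inf f^g$ is a harmless (and welcome) extra precaution.
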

\begin{proof} $\overline{x}$ and $\overline{x}^*$ are solutions of
$(P)$ and $(D_g)$ respectively if and only if
$$f(\overline{x})= \inf f=-\inf f^g=-f^g(\overline{x}^*)
\Longleftrightarrow
f(\overline{x})+f^g(\overline{x}^*)=\gamma_{g,f}
(\overline{x},\overline{x}^*)=0.$$
\end{proof}

Define $m\left(\gamma_{g,f}\right):=\left\{(x,x^*)\in \R^n\times
C: \gamma_{g,f}(x,x^*)=0\right\}$, for the previous theorem,
$(x_0,x^*_0)$ belongs to $m\left(\gamma_{g,f}\right)$ if and only
if $x_0$ is a solution of $(P)$ and $x_0^*$ is a solution of
$(D_g)$. Take $f\in \mathcal{F}^n$ and $g\in \mathcal{F}^{n,m}_f$.
Define the set $R\left(\gamma_{g,f}\right)$, as follows:
$$R\left(\gamma_{g,f}\right):=\bigcap_{(x,x^*)\in \R^n\times C}
\left(S_{\gamma_{g,f}(x,x^*)}\left(\gamma_{g,f}\right)\right)^{\infty},$$
where $S_{\lambda}\left(\gamma_{g,f}\right)$ stands for the
$\lambda$-level set of the function $\gamma_{g,f}$
$$S_{\lambda}\left(\gamma_{g,f}\right):=\{(x,x^*):\gamma_{g,f}(x,x^*)
\leq\lambda\}.$$ and for $A\subset \R^p$,
$$A^{\infty}:=\{d\in \R^p:\ \exists\{x_k\}_{k\in
\N}\subset A,\{t_k\}_{k\in \N}\downarrow 0:\ t_kx_k\rightarrow
d\},$$ with the convention, $\emptyset^{\infty}=\{0\}$ the
following properties are satisfied (see \cite{AUS-TEB}):

\begin{lemma}\label{lemmAUSTEB} Let it be $A,B\subset \R^p$.
The following are true:
\begin{enumerate}\item[i)] $A^{\infty}$ is a closed cone.
\item[ii)] $A^{\infty}=\{0\}$ if and only if $A$ is bounded.
\item[iii)] If $A\subset B$ then $A^{\infty}\subset
B^{\infty}$.
\item[iv)] $A^{\infty}=(\overline{A})^{\infty}$.
\item[v)] If $A$ is a cone, then $A^{\infty}=\overline{A}$.
\item[vi)] If $(A_i)_{i\in I}$ is a family of sets in $\R^p$,
we have that $$\left(\bigcap_{i\in I}A_i\right)^{\infty}\subset
\bigcap_{i\in I}A_i^{\infty}.$$
\end{enumerate}
\end{lemma}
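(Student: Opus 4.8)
The plan is to verify the six items almost independently from the defining property of $A^\infty$, ordering the work so that the trivial inclusions come first and get reused. I would start with (iii): if $d\in A^\infty$, the sequences $\{x_k\}\subset A$ and $\{t_k\}\downarrow 0$ witnessing $t_kx_k\to d$ lie in $B$ as well, so $d\in B^\infty$ (the empty-set convention $\emptyset^\infty=\{0\}$ is consistent with this). Item (vi) is then one line: $\bigcap_{i\in I}A_i\subset A_j$ for every $j$, hence $(\bigcap_i A_i)^\infty\subset A_j^\infty$ for every $j$ by (iii), and therefore $(\bigcap_i A_i)^\infty\subset\bigcap_j A_j^\infty$.

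Next I would handle (i). For the cone property: given $d\in A^\infty$ witnessed by $(x_k,t_k)$ and $\lambda>0$, the pair $(x_k,\lambda t_k)$ witnesses $\lambda d$ since $\lambda t_k\downarrow 0$ and $(\lambda t_k)x_k\to\lambda d$; and $0\in A^\infty$ because, choosing any $x\in A$ (or invoking the convention if $A=\emptyset$), one has $t_kx\to 0$ for any $t_k\downarrow 0$. Closedness is the one genuinely technical point: take $d^j\to d$ with each $d^j\in A^\infty$; for each $j$ extract from the witnessing sequences of $d^j$ an index giving a point $x^j\in A$ and a scalar $s_j>0$ with $s_j<1/j$ and $\|s_jx^j-d^j\|<1/j$, passing to a subsequence in $j$ so that $\{s_j\}$ is strictly decreasing to $0$; then $\|s_jx^j-d\|\le\|s_jx^j-d^j\|+\|d^j-d\|\to 0$, so $d\in A^\infty$. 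This diagonal extraction is the main obstacle, though it is entirely standard.

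For (ii): if $A$ is bounded and $d\in A^\infty$, then $t_kx_k\to d$ with $\{x_k\}$ bounded and $t_k\to 0$ forces $d=0$ (if $A=\emptyset$ then $A^\infty=\{0\}$ by convention), so $A^\infty=\{0\}$; conversely, if $A$ is unbounded, pick $x_k\in A$ with $\|x_k\|\to+\infty$, set $t_k=1/\|x_k\|\downarrow 0$, and extract a convergent subsequence of $\{t_kx_k\}$ from the unit sphere to obtain a unit vector in $A^\infty$, so $A^\infty\neq\{0\}$. For (iv), the inclusion ``$\subset$'' is (iii) applied to $A\subset\overline A$; for ``$\supset$'', given $d\in(\overline A)^\infty$ witnessed by $(\bar x_k,t_k)$, choose $x_k\in A$ with $\|x_k-\bar x_k\|\le 1$, whence $t_kx_k=t_k\bar x_k+t_k(x_k-\bar x_k)\to d$, so $d\in A^\infty$. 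Finally (v), for $A$ a cone: if $d\in A^\infty$ with witnesses $(x_k,t_k)$ then $t_kx_k\in A$ (cone property, $t_k>0$), so $d$ is a limit of points of $A$, i.e. $d\in\overline A$; conversely, if $d\in\overline A$ with $x_k\in A$, $x_k\to d$, pick any $t_k\downarrow 0$, note $x_k/t_k\in A$ since $A$ is a cone and $1/t_k>0$, and $t_k(x_k/t_k)=x_k\to d$, so $d\in A^\infty$. All steps other than the closedness argument in (i) are direct manipulations of the defining sequences.
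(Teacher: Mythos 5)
Your proof is correct, but note that the paper does not actually prove this lemma: it records these as standard properties of the asymptotic cone and simply refers the reader to Auslender and Teboulle \cite{AUS-TEB}. Your self-contained argument --- deducing (vi) from the monotonicity (iii), handling closedness in (i) by a diagonal extraction from the witnessing sequences, and checking the $\emptyset^{\infty}=\{0\}$ convention where it matters --- is the standard one and is what that reference carries out, so it fills in exactly what the paper leaves implicit. Two small points are worth making explicit. In (ii), when $A$ is unbounded you should first pass to a subsequence along which $\|x_k\|$ is strictly increasing, so that $t_k=1/\|x_k\|$ is genuinely decreasing; the definition of $A^{\infty}$ used here requires $\{t_k\}\downarrow 0$, not merely $t_k\to 0$ (the same remark applies to the scalars $s_j$ in your closedness argument, which you do address by extracting a decreasing subsequence). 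In (v), the statement tacitly assumes the cone $A$ is non-empty (otherwise $A^{\infty}=\{0\}$ while $\overline{A}=\emptyset$), which is the intended reading and is how the lemma is applied in the paper.
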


The proof of the next Lemma follows from: item (vi) of Lemma
\ref{lemmAUSTEB}, the definition of $R\left(\gamma_{g,f}\right)$
and the fact that $m\left(\gamma_{g,f}\right) =\displaystyle
\bigcap_{(x,x^*)\in \R^n\times C}
\left(S_{\gamma_{g,f}(x,x^*)}\left(\gamma_{g,f}\right)\right)$.

\begin{lemma}\label{l1}
$(m\left(\gamma_{g,f}\right))^\infty \subset
R\left(\gamma_{g,f}\right)$.
\end{lemma}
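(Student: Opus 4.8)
The plan is to unwind the two set-theoretic identities the paper hands us and then invoke Lemma \ref{lemmAUSTEB}(vi) directly. Recall that $m(\gamma_{g,f})$ was just observed to equal the intersection $\bigcap_{(x,x^*)\in \R^n\times C} S_{\gamma_{g,f}(x,x^*)}(\gamma_{g,f})$: indeed, a point $(y,y^*)$ lies in every set $S_{\gamma_{g,f}(x,x^*)}(\gamma_{g,f})$ iff $\gamma_{g,f}(y,y^*)\le \gamma_{g,f}(x,x^*)$ for all $(x,x^*)\in\R^n\times C$, i.e.\ iff $\gamma_{g,f}(y,y^*)=\inf\gamma_{g,f}=0$, which is exactly membership in $m(\gamma_{g,f})$. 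This is the identity flagged in the sentence preceding the lemma, so I would simply cite it rather than reprove it.

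With that in hand, the proof is two lines. Apply item (vi) of Lemma \ref{lemmAUSTEB} to the family $\{S_{\gamma_{g,f}(x,x^*)}(\gamma_{g,f})\}_{(x,x^*)\in\R^n\times C}$ indexed by $I=\R^n\times C$: this gives
$$\left(m(\gamma_{g,f})\right)^\infty=\left(\bigcap_{(x,x^*)\in\R^n\times C} S_{\gamma_{g,f}(x,x^*)}(\gamma_{g,f})\right)^\infty\subset \bigcap_{(x,x^*)\in\R^n\times C}\left(S_{\gamma_{g,f}(x,x^*)}(\gamma_{g,f})\right)^\infty = R(\gamma_{g,f}),$$
the last equality being nothing but the definition of $R(\gamma_{g,f})$. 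That completes the argument.

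There is essentially no obstacle here; the lemma is a packaging result. The only point that needs a moment's care is the edge case $m(\gamma_{g,f})=\emptyset$ (which happens precisely when $(P)$ has no optimal solution): then $(m(\gamma_{g,f}))^\infty=\emptyset^\infty=\{0\}$ by the stated convention, and since $R(\gamma_{g,f})$ is an intersection of recession cones each of which contains $0$ (every $A^\infty$ is a cone by Lemma \ref{lemmAUSTEB}(i), and a nonempty cone contains the origin; if some level set is empty then its recession cone is $\{0\}$ as well), the inclusion $\{0\}\subset R(\gamma_{g,f})$ still holds. So the inclusion is valid in all cases, and I would add one sentence noting this so the reader is not left wondering about degenerate situations.
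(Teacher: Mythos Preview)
Your proof is correct and follows exactly the approach the paper sketches: the identity $m(\gamma_{g,f})=\bigcap_{(x,x^*)} S_{\gamma_{g,f}(x,x^*)}(\gamma_{g,f})$, then Lemma~\ref{lemmAUSTEB}(vi), then the definition of $R(\gamma_{g,f})$. One small quibble unrelated to the logic: your parenthetical that $m(\gamma_{g,f})=\emptyset$ ``happens precisely when $(P)$ has no optimal solution'' is not quite right---by the theorem just above, $m(\gamma_{g,f})$ is empty whenever \emph{either} $(P)$ or $(D_g)$ fails to attain its infimum---but the treatment of the empty case itself is fine.
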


Note that, if $R\left(\gamma_{g,f}\right) = \{0\}$, then from
Lemma \ref{l1} $m\left(\gamma_{g,f}\right)$ is bounded (may be
empty).

\begin{lemma}\label{lc}
Let $\{\lambda_k\}_{k\in \N}$ be a sequence such that
$\displaystyle \lim_{k\to +\infty} \lambda_k = 0$ and $\lambda_k >
0$ $\forall k \in \N$. If $m\left(\gamma_{g,f}\right) =
\emptyset$, then $R\left(\gamma_{g,f}\right) =\displaystyle
\bigcap_{k\in \N} \left(S_{\lambda_k}\left(\gamma_{g,f}\right)
\right)^\infty$.
\end{lemma}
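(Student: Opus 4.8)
The plan is to reduce both sides of the claimed identity to the single intersection $\bigcap_{\lambda>0}\left(S_\lambda(\gamma_{g,f})\right)^\infty$, using only the monotonicity of level sets and item (iii) of Lemma \ref{lemmAUSTEB}. First I would record the two facts that drive everything. Since $g\in\mathcal{F}_f^{n,m}$ we have $\inf_{(x,x^*)\in\R^n\times C}\gamma_{g,f}(x,x^*)=\inf f+\inf_{x^*\in C}f^g(x^*)=0$, and since $m(\gamma_{g,f})=\emptyset$ this infimum is never attained, so $\gamma_{g,f}(x,x^*)>0$ for every $(x,x^*)\in\R^n\times C$. Moreover the level sets are nested: $\lambda\le\mu$ implies $S_\lambda(\gamma_{g,f})\subset S_\mu(\gamma_{g,f})$, whence $\left(S_\lambda(\gamma_{g,f})\right)^\infty\subset\left(S_\mu(\gamma_{g,f})\right)^\infty$ by Lemma \ref{lemmAUSTEB}(iii). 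Finally, in the definition of $R(\gamma_{g,f})$ the terms indexed by points $(x,x^*)$ with $\gamma_{g,f}(x,x^*)=+\infty$ are equal to $\left(\R^n\times\R^m\right)^\infty=\R^n\times\R^m$ and so can be discarded; hence $R(\gamma_{g,f})=\bigcap\left\{\left(S_v(\gamma_{g,f})\right)^\infty : v\in V\right\}$, where $V:=\{\gamma_{g,f}(x,x^*):(x,x^*)\in\R^n\times C,\ \gamma_{g,f}(x,x^*)<+\infty\}\subset(0,+\infty)$ satisfies $\inf V=0$.

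The second step is to prove $R(\gamma_{g,f})=\bigcap_{\lambda>0}\left(S_\lambda(\gamma_{g,f})\right)^\infty$. The inclusion ``$\supset$'' is immediate, since every $v\in V$ is a positive real, so $\bigcap_{\lambda>0}\left(S_\lambda(\gamma_{g,f})\right)^\infty\subset\left(S_v(\gamma_{g,f})\right)^\infty$. For ``$\subset$'', fix $\lambda>0$; as $\inf V=0$ there is $v\in V$ with $v<\lambda$, hence $\left(S_v(\gamma_{g,f})\right)^\infty\subset\left(S_\lambda(\gamma_{g,f})\right)^\infty$ by the monotonicity recalled above, and $\left(S_v(\gamma_{g,f})\right)^\infty$ is one of the terms defining $R(\gamma_{g,f})$; thus $R(\gamma_{g,f})\subset\left(S_\lambda(\gamma_{g,f})\right)^\infty$, and intersecting over all $\lambda>0$ gives the claim.

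The third step is the analogous comparison with the sequence $\{\lambda_k\}$. The inclusion $\bigcap_{\lambda>0}\left(S_\lambda(\gamma_{g,f})\right)^\infty\subset\bigcap_{k\in\N}\left(S_{\lambda_k}(\gamma_{g,f})\right)^\infty$ is trivial because every $\lambda_k$ is positive. Conversely, given $\lambda>0$, from $\lambda_k\to0$ there is $k$ with $0<\lambda_k<\lambda$, so $\left(S_{\lambda_k}(\gamma_{g,f})\right)^\infty\subset\left(S_\lambda(\gamma_{g,f})\right)^\infty$, whence $\bigcap_{j\in\N}\left(S_{\lambda_j}(\gamma_{g,f})\right)^\infty\subset\left(S_\lambda(\gamma_{g,f})\right)^\infty$; intersecting over $\lambda>0$ yields the reverse inclusion. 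Combining the last two steps gives $R(\gamma_{g,f})=\bigcap_{\lambda>0}\left(S_\lambda(\gamma_{g,f})\right)^\infty=\bigcap_{k\in\N}\left(S_{\lambda_k}(\gamma_{g,f})\right)^\infty$.

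The only delicate points are bookkeeping ones: keeping straight the direction of each inclusion between intersections (intersecting more sets yields a smaller set), and the fact that $\gamma_{g,f}$ may take the value $+\infty$ on part of $\R^n\times C$, which forces the reduction to $V$. It is worth stressing where the hypothesis $m(\gamma_{g,f})=\emptyset$ enters: it is exactly what guarantees $0\notin V$, so that every level appearing in $R(\gamma_{g,f})$ is strictly positive and the approximation $\inf V=0$ can be used; if $m(\gamma_{g,f})\neq\emptyset$, the term $\left(S_0(\gamma_{g,f})\right)^\infty=\left(m(\gamma_{g,f})\right)^\infty$ would enter $R(\gamma_{g,f})$ and the stated equality could fail.
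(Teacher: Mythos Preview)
Your argument is correct and follows the same idea as the paper's: use $m(\gamma_{g,f})=\emptyset$ to ensure every level appearing in $R(\gamma_{g,f})$ is strictly positive, then invoke monotonicity of level sets (Lemma~\ref{lemmAUSTEB}(iii)) together with $\lambda_k\to 0$ to match the two intersections. The only organizational difference is that the paper argues the nontrivial inclusion directly---fixing $(x,x^*)$ and choosing $q$ with $\lambda_q\le\gamma_{g,f}(x,x^*)$---while you pass through the intermediary $\bigcap_{\lambda>0}\left(S_\lambda(\gamma_{g,f})\right)^\infty$; your handling of the $+\infty$ terms and the ``easy'' inclusion is in fact more explicit than the paper's.
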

\begin{proof}
We only need to show that $\displaystyle \bigcap_{k\in \N}
\left(S_{\lambda_k}\left(\gamma_{g,f}\right) \right)^\infty
\subset R\left(\gamma_{g,f}\right)$. Indeed, take $u \in
\displaystyle \bigcap_{k\in \N}
\left(S_{\lambda_k}\left(\gamma_{g,f}\right) \right)^\infty$, then
$u \in \left(S_{\lambda_k}\left(\gamma_{g,f}\right)
\right)^\infty$ $\forall k \in \N$. For each $(x,x^*) \in
\R^n\times C$ (arbitrarily fixed), we have that $0 <
\gamma_{g,f}(x,x^*)$ (because $m\left(\gamma_{g,f}\right) =
\emptyset$). Since $\displaystyle \lim_{n\to +\infty} \lambda_n =
0$, we have that there exists $q\in \N$ such that $\lambda_q \leq
\gamma_{g,f}(x,x^*)$. So, $S_{\lambda_q}\left(\gamma_{g,f}\right)
\subset S_{\gamma_{g,f}(x,x^*)}\left(\gamma_{g,f}\right)$. It
implies that $\left(S_{\lambda_k}\left(\gamma_{g,f}\right)
\right)^\infty \subset
\left(S_{\gamma_{g,f}(x,x^*)}\left(\gamma_{g,f}\right)\right)^\infty$
and so $u \in
\left(S_{\gamma_{g,f}(x,x^*)}\left(\gamma_{g,f}\right)\right)^\infty$.
The statement follows.
\end{proof}

For the next Lemma, consider that the function $\gamma_{g,f}$ is
l.s.c. (it can be obtained, for example, if $f$ and $g(x,\cdot)$
$\forall x \in \R^n$ are l.s.c.).

\begin{lemma}\label{lpt}
$m\left(\gamma_{g,f}\right) = \emptyset$ if and only if
$(m\left(\gamma_{g,f}\right))^\infty \neq
R\left(\gamma_{g,f}\right)$
\end{lemma}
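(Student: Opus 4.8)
The plan is to prove the two implications separately, using throughout that $\gamma_{g,f}\ge 0$ everywhere (it equals $+\infty$ off $\R^n\times C$, while on $\R^n\times C$ the inequality $f(x)+f^g(x^*)\ge g(x,x^*)\ge 0$ from the basic lemma applies) and that $\inf\gamma_{g,f}=0$ since $g\in\mathcal{F}_f^{n,m}$.

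For the implication ``$\Leftarrow$'' I would argue by contraposition: assuming $m\left(\gamma_{g,f}\right)\neq\emptyset$, I show $(m\left(\gamma_{g,f}\right))^\infty=R\left(\gamma_{g,f}\right)$. The inclusion $\subset$ is Lemma \ref{l1}. For $\supset$, pick $(x_0,x_0^*)\in m\left(\gamma_{g,f}\right)\subset\R^n\times C$; then $\gamma_{g,f}(x_0,x_0^*)=0$, and since $\gamma_{g,f}\ge 0$ we get $S_{\gamma_{g,f}(x_0,x_0^*)}\left(\gamma_{g,f}\right)=S_0\left(\gamma_{g,f}\right)=m\left(\gamma_{g,f}\right)$. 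As this set occurs among those intersected in the definition of $R\left(\gamma_{g,f}\right)$, we obtain $R\left(\gamma_{g,f}\right)\subset\left(S_{\gamma_{g,f}(x_0,x_0^*)}\left(\gamma_{g,f}\right)\right)^\infty=(m\left(\gamma_{g,f}\right))^\infty$, hence equality.

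For the implication ``$\Rightarrow$'' assume $m\left(\gamma_{g,f}\right)=\emptyset$; by the convention $\emptyset^\infty=\{0\}$ it suffices to produce a nonzero vector in $R\left(\gamma_{g,f}\right)$. Since $\inf\gamma_{g,f}=0<+\infty$, fix a minimizing sequence $(x_k,x_k^*)\in\R^n\times C$ with $\lambda_k:=\gamma_{g,f}(x_k,x_k^*)\to 0$; note $\lambda_k>0$ for all $k$, for otherwise $m\left(\gamma_{g,f}\right)\neq\emptyset$. This sequence must be unbounded: if not, a subsequence converges to some $(\bar x,\bar x^*)$ with $\bar x^*\in C$ ($C$ closed), and lower semicontinuity of $\gamma_{g,f}$ forces $\gamma_{g,f}(\bar x,\bar x^*)\le\liminf_k\lambda_k=0$, so $(\bar x,\bar x^*)\in m\left(\gamma_{g,f}\right)$, a contradiction. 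Passing to a subsequence, assume $\|(x_k,x_k^*)\|\to+\infty$ and (compactness of the unit sphere) that $t_k(x_k,x_k^*)\to d$ with $t_k:=\|(x_k,x_k^*)\|^{-1}\downarrow 0$ and $\|d\|=1$, so $d\neq 0$.

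Finally I would verify $d\in R\left(\gamma_{g,f}\right)$. Fix any $(x,x^*)\in\R^n\times C$; since $m\left(\gamma_{g,f}\right)=\emptyset$ we have $\gamma_{g,f}(x,x^*)>0$, and because $\lambda_k\to 0$ there is $K$ with $\lambda_k\le\gamma_{g,f}(x,x^*)$ for $k\ge K$, i.e. $(x_k,x_k^*)\in S_{\gamma_{g,f}(x,x^*)}\left(\gamma_{g,f}\right)$ for $k\ge K$. The tail $\{(x_k,x_k^*)\}_{k\ge K}$ together with $\{t_k\}_{k\ge K}$ then witnesses $d\in\bigl(S_{\gamma_{g,f}(x,x^*)}\left(\gamma_{g,f}\right)\bigr)^\infty$ by the definition of $A^\infty$. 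As $(x,x^*)$ was arbitrary, $d\in\bigcap_{(x,x^*)\in\R^n\times C}\bigl(S_{\gamma_{g,f}(x,x^*)}\left(\gamma_{g,f}\right)\bigr)^\infty=R\left(\gamma_{g,f}\right)$ (alternatively one quotes Lemma \ref{lc}). Thus $R\left(\gamma_{g,f}\right)\neq\{0\}=(m\left(\gamma_{g,f}\right))^\infty$. I expect the main obstacle to be the ``$\Rightarrow$'' direction: passing from ``no minimizer'' to ``the minimizing sequence is unbounded'' via lower semicontinuity, and then correctly applying the definition of $A^\infty$ to a tail of that sequence; the ``$\Leftarrow$'' direction is essentially immediate once one observes $S_0\left(\gamma_{g,f}\right)=m\left(\gamma_{g,f}\right)$.
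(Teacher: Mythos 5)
Your proof is correct. The backward direction coincides with the paper's: both argue by contraposition, note that $S_{0}\left(\gamma_{g,f}\right)=m\left(\gamma_{g,f}\right)$ appears among the sets intersected in $R\left(\gamma_{g,f}\right)$, hence $R\left(\gamma_{g,f}\right)\subset (m\left(\gamma_{g,f}\right))^\infty$, and invoke Lemma \ref{l1} for the reverse inclusion. Your forward direction, however, takes a genuinely more direct route. The paper first uses (the argument of) Proposition \ref{pr1}(ii) to conclude that every level set $S_{\gamma_{g,f}(x,x^*)}\left(\gamma_{g,f}\right)$ is unbounded, then selects unit vectors $u^k\in \left(S_{\lambda_k}\left(\gamma_{g,f}\right)\right)^\infty$ for a decreasing null sequence $\{\lambda_k\}$, exploits the nestedness of these asymptotic cones together with compactness of the unit sphere to extract a cluster point lying in all of them, and finally calls on Lemma \ref{lc} to place that cluster point in $R\left(\gamma_{g,f}\right)$. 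You instead work with a single minimizing sequence: lower semicontinuity of $\gamma_{g,f}$ and closedness of $C$ force it to be unbounded, its normalization yields one unit asymptotic direction $d$, and the tail of that very sequence witnesses $d\in\left(S_{\gamma_{g,f}(x,x^*)}\left(\gamma_{g,f}\right)\right)^\infty$ for every $(x,x^*)$ straight from the definition of $A^\infty$. This buys self-containedness --- you bypass Proposition \ref{pr1} and (essentially) Lemma \ref{lc}, and avoid the two-stage cluster-point argument --- while the paper's version stays within the asymptotic-cone calculus of Lemma \ref{lemmAUSTEB} and keeps Lemma \ref{lc} in active use. The only cosmetic point: to match the paper's definition of $A^\infty$ literally (with $\{t_k\}\downarrow 0$) you should pass to a further subsequence along which $\|(x_k,x_k^*)\|$ is strictly increasing; this is harmless.
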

\begin{proof}
\begin{enumerate}
\item [$\rightarrow$] If $m\left(\gamma_{g,f}\right) = \emptyset$, then
$(m\left(\gamma_{g,f}\right))^\infty = \{0\}$ and
$\gamma_{g,f}(x,x^*) > 0$ $\forall (x,x^*) \in \R^n\times C$.
From, item (ii) of Proposition \ref{pr1}, we have that
$\left(S_{\gamma_{g,f}(x,x^*)}\left(\gamma_{g,f}\right)\right)$
are unbounded $\forall (x,x^*) \in \R^n\times C$. Now, consider
$\{\lambda_k\}_{k\in \N}$ such that $\displaystyle \lim_{k\to
+\infty} \lambda_k = 0$ and $\lambda_k > \lambda_{k+1}$ $\forall k
\in \N$. Take $u^k \in
\left(S_{\lambda_k}\left(\gamma_{g,f}\right) \right)^\infty$ with
$||u^k|| = 1$ $\forall k \in \N$. Since $\lambda_q \leq \lambda_k$
$\forall k \in \N$ and $\forall q \geq k$, we have that
$\left(S_{\lambda_q}\left(\gamma_{g,f}\right) \right)^\infty
\subset \left(S_{\lambda_k}\left(\gamma_{g,f}\right)
\right)^\infty$ and $\{u^q\}_{q\geq k} \subset
\left(S_{\lambda_k}\left(\gamma_{g,f}\right) \right)^\infty$
$\forall k \in \N$ and so any cluster point of $\{u^k\}_{k\in \N}$
belong to $\left(S_{\lambda_k}\left(\gamma_{g,f}\right)
\right)^\infty$ $\forall k\in \N$. From Lemma \ref{lc} we have
that any cluster point of $\{u^k\}_{k\in \N}$ belong to
$R\left(\gamma_{g,f}\right)$. So, the statement follows.
\item [$\leftarrow$] It is equivalent to show that: If $m\left(\gamma_{g,f}\right) \neq \emptyset$, then $R\left(\gamma_{g,f}\right) = (m\left(\gamma_{g,f}\right))^\infty$. Indeed, we know that $S_{0}\left(\gamma_{g,f}\right) = m\left(\gamma_{g,f}\right) \neq \emptyset$. So, we
have that, $R\left(\gamma_{g,f}\right) \subset
(S_{0}\left(\gamma_{g,f}\right))^\infty =
(m\left(\gamma_{g,f}\right))^\infty$. The statement follows.
\end{enumerate}
\end{proof}

\begin{theorem}
Given $n,m\in \N$, $f\in \mathcal{F}^n$ l.s.c. and
$g\in\mathcal{F}^{n,m}_f$ such that $g(x,\cdot)$ is l.s.c. for all
$x\in \R^n$. The following statements are equivalents.
\begin{enumerate}
\item $R\left(\gamma_{g,f}\right)=\{0\}$.
\item $m\left(\gamma_{g,f}\right)$ is nonempty and compact.
\end{enumerate}
\end{theorem}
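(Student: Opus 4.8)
The plan is to derive both implications directly from three results already established: Lemma~\ref{l1} (which gives $(m(\gamma_{g,f}))^\infty \subset R(\gamma_{g,f})$), Lemma~\ref{lpt} (which says $m(\gamma_{g,f}) = \emptyset$ is equivalent to $(m(\gamma_{g,f}))^\infty \neq R(\gamma_{g,f})$), and item (ii) of Lemma~\ref{lemmAUSTEB} (a subset of $\R^p$ has asymptotic cone $\{0\}$ precisely when it is bounded). Throughout I will use two elementary facts: that $0\in A^\infty$ for every $A$ (immediate from the definition, or from Lemma~\ref{lemmAUSTEB}(i)), and that, since under the present hypotheses $f$ and every $g(x,\cdot)$ are l.s.c., the function $\gamma_{g,f}$ is l.s.c., so that $m(\gamma_{g,f}) = S_0(\gamma_{g,f})$ is closed (recall $\gamma_{g,f}\geq 0$, by the first Lemma of Section~3).

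For the implication $1\Rightarrow 2$, I would start from $R(\gamma_{g,f}) = \{0\}$ and invoke Lemma~\ref{l1} to get $(m(\gamma_{g,f}))^\infty \subset \{0\}$; combined with $0\in(m(\gamma_{g,f}))^\infty$ this is an equality, so Lemma~\ref{lemmAUSTEB}(ii) makes $m(\gamma_{g,f})$ bounded, hence compact since it is also closed. For nonemptiness I would argue by contradiction: if $m(\gamma_{g,f}) = \emptyset$, then Lemma~\ref{lpt} forces $(m(\gamma_{g,f}))^\infty \neq R(\gamma_{g,f})$, yet $(m(\gamma_{g,f}))^\infty = \emptyset^\infty = \{0\} = R(\gamma_{g,f})$, a contradiction.

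For $2\Rightarrow 1$, starting from $m(\gamma_{g,f})$ nonempty and compact, the nonemptiness lets me run the $\leftarrow$ half of the proof of Lemma~\ref{lpt}: since $m(\gamma_{g,f}) = S_0(\gamma_{g,f})$, one has $R(\gamma_{g,f}) \subset (S_0(\gamma_{g,f}))^\infty = (m(\gamma_{g,f}))^\infty$, and the reverse inclusion is Lemma~\ref{l1}, so $R(\gamma_{g,f}) = (m(\gamma_{g,f}))^\infty$; then compactness (hence boundedness) together with Lemma~\ref{lemmAUSTEB}(ii) gives $(m(\gamma_{g,f}))^\infty = \{0\}$, whence $R(\gamma_{g,f}) = \{0\}$.

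I do not expect a genuine obstacle here: the asymptotic-cone calculus built up in Lemmas~\ref{l1}--\ref{lpt} was designed exactly so that this equivalence becomes a short formal argument. The only points I would be careful about are the closedness of $m(\gamma_{g,f})$, which rests on $\gamma_{g,f}$ being l.s.c. (guaranteed by the l.s.c. hypotheses on $f$ and on each $g(x,\cdot)$ noted before Lemma~\ref{lpt}), and the small but essential remark that $0$ always lies in an asymptotic cone, which is what upgrades the inclusion $(m(\gamma_{g,f}))^\infty\subset\{0\}$ to an equality in the first implication.
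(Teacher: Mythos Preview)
Your proposal is correct and follows essentially the same route as the paper: for $1\Rightarrow 2$ both you and the paper combine Lemma~\ref{l1} (to get $(m(\gamma_{g,f}))^\infty=\{0\}$), Lemma~\ref{lpt} (to extract nonemptiness), and Lemma~\ref{lemmAUSTEB}(ii) together with the l.s.c.\ of $\gamma_{g,f}$ (to conclude compactness). The only difference is that the paper declares the direction $2\Rightarrow 1$ to need no proof and omits it, whereas you spell it out via the $\leftarrow$ half of Lemma~\ref{lpt} and Lemma~\ref{lemmAUSTEB}(ii); your argument for that direction is exactly the implicit one the paper has in mind.
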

\begin{proof}
We need to proof only the implication {\bf (i) implies (ii)}.
Indeed, from l.s.c of $f$ and $g(x,\cdot)$ $\forall x \in \R^n$,
we have that $\gamma_{g,f}$ is l.s.c. on $\R^n\times C$. Since
$R\left(\gamma_{g,f}\right)=\{0\}$, then from Lemma \ref{l1}
$\left(m\left(\gamma_{g,f}\right)\right)^\infty =\{0\}$. So, from
Lemma \ref{lpt} we have $m\left(\gamma_{g,f}\right) \neq
\emptyset$, here $m\left(\gamma_{g,f}\right)$ is closed. The
statement follows applying Lemma \ref{lemmAUSTEB} item (ii).
\end{proof}

At this point a natural question arises, if $g\in
\mathcal{F}_f^{n,m}$, would be there any kind of relation between
the optimal points and the optimal values of $f$ and $f^{gg}$? The
next lemma answers this.

\begin{lemma}
For a fixed $m\in \N$ and every $g\in \mathcal{F}^{n,m}_f$, the
following are satisfied:
\begin{enumerate}
\item[i)] $\inf f=\inf f^{gg},$
\item[ii)] if $x_0$ is a global minimum of $f$, then $x_0$ is a
global minimum of $f^{gg}$.
\end{enumerate}
\end{lemma}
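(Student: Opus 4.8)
The plan is to trap $\inf f^{gg}$ between $\inf f$ on both sides, using only the non-negativity of $g$, the basic conjugation inequality already proved, and the no-duality-gap identity $\inf_{x\in\R^n}f(x)=-\inf_{x^*\in C}f^g(x^*)$ established just before the definition of $\gamma_{g,f}$. First I would record the pointwise bound $f^{gg}\le f$ on $\R^n$: for any $x\in\R^n$ and any $x^*\in C$, the definition of $f^g$ gives $f^g(x^*)\ge g(x,x^*)-f(x)$, hence $g(x,x^*)-f^g(x^*)\le f(x)$; taking the supremum over $x^*\in C$ (which is non-empty) yields $f^{gg}(x)\le f(x)$, and therefore $\inf f^{gg}\le\inf f$.

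For the reverse inequality I would invoke the always-valid minimax inequality $\inf_x\sup_{x^*}(\cdot)\ge\sup_{x^*}\inf_x(\cdot)$:
$$\inf_{x\in\R^n}f^{gg}(x)=\inf_{x\in\R^n}\ \sup_{x^*\in C}\{g(x,x^*)-f^g(x^*)\}\ \ge\ \sup_{x^*\in C}\ \inf_{x\in\R^n}\{g(x,x^*)-f^g(x^*)\}.$$
Since $g\ge 0$, for each fixed $x^*\in C$ one has $\inf_{x\in\R^n}\{g(x,x^*)-f^g(x^*)\}=\inf_{x\in\R^n}g(x,x^*)-f^g(x^*)\ge -f^g(x^*)$, whence
$$\inf_{x\in\R^n}f^{gg}(x)\ \ge\ \sup_{x^*\in C}\bigl(-f^g(x^*)\bigr)=-\inf_{x^*\in C}f^g(x^*)=\inf_{x\in\R^n}f(x).$$
Combining the two bounds gives $\inf f=\inf f^{gg}$, which is (i). (Alternatively, as the text already cites for the case $\inf f=-\infty$, one may simply quote the corresponding statement from \cite{SOSA}.)

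For (ii), let $x_0$ be a global minimizer of $f$, so $f(x_0)=\inf f$. By (i), $\inf f^{gg}=\inf f=f(x_0)$; by the pointwise bound $f^{gg}\le f$ from the first step, $f^{gg}(x_0)\le f(x_0)=\inf f^{gg}$; and trivially $f^{gg}(x_0)\ge\inf f^{gg}$. Hence $f^{gg}(x_0)=\inf f^{gg}$, i.e. $x_0$ is a global minimizer of $f^{gg}$.

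The only point that needs care is the direction of the inf–sup exchange in (i): one must use the weak-duality direction $\inf_x\sup_{x^*}\ge\sup_{x^*}\inf_x$ (the reverse would be false in general and is not needed), and note that no convexity/lower semicontinuity of $g(x,\cdot)$ nor property $(D3)$ is required, only $g\ge 0$, $C\neq\emptyset$, and $\inf\gamma_{g,f}=0$. Everything else is routine.
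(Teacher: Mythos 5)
Your proof is correct and follows essentially the same route as the paper: the bound $\inf f^{gg}\le\inf f$ (which the paper simply asserts as standard and you verify via the pointwise inequality $f^{gg}\le f$), the inequality $f^{gg}(x)+f^g(x^*)\ge g(x,x^*)\ge 0$ giving $\inf f^{gg}\ge -\inf_{x^*\in C}f^g(x^*)=\inf f$ (your minimax phrasing is just a repackaging of this), and the sandwich $f^{gg}(x_0)\le f(x_0)=\inf f=\inf f^{gg}\le f^{gg}(x_0)$ for part (ii). No gaps.
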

\begin{proof} Remember that $f^{gg}$ is defined by:
$$f^{gg}(x)= \sup_{x^*\in C}\{g(x,x^*)-f^g(x^*)\},$$ where $C$ is
the closed set linked to $g$.
\begin{enumerate}
\item[i)] $\inf f^{gg}\leq \inf f$ is always true. On the other
hand $$f^g(x^*)+f^{gg}(x)\geq g(x,x^*)\geq 0,\ \forall x\in \R^n,\
x^*\in C,$$ which implies that $$\inf f^{gg}\geq -\inf_{x^*\in
C}f^g(x^*).$$ But, since $g\in \mathcal{F}_f^{n,m}$ one has that
$$\inf f=-\inf_{x^*\in C}f^g(x^*),$$ which means
$$\inf f\leq \inf f^{gg}\leq \inf f.$$ Therefore $\inf f=\inf
f^{gg}$.
\item[ii)] $f^{gg}(x_0)\leq f(x_0)=\inf f=\inf f^{gg}\leq
f^{gg}(x_0),$ then $f^{gg}(x_0)=\inf f^{gg}$.
\end{enumerate}
\end{proof}

It would be interesting to get a duality scheme which generalizes
others like the Lagrangian one. By example, let us consider
$$g(x,x^*)=\left\{\begin{array}{cl}
  f(x)+h^*(x^*) & \mbox{if }x\in dom(f),\ x^*\in dom(h^*) \\
  0 & \mbox{if }x\notin dom(f),\ x^*\in dom(h^*) \\
  +\infty & \mbox{if }x^*\notin dom(h^*)
\end{array} \right.$$ where $h^*$ is the objective function of
problem $(Q)$ (section 2) and if in addition $h$ is l.s.c. at 0,
one has that $g\in \mathcal{F}_f^{n,p}$ (with $C=dom(h^*)$), even
more $f^g\equiv h^*$:
$$f^g(x^*)=\sup_{x\in \R^n} \{g(x,x^*)-f(x)\},\ x^*\in C,$$
but $$g(x,x^*)-f(x)= \left\{\begin{array}{cl}
  f(x)+h^*(x^*)-f(x) & if\ x\in dom(f) \\
  -\infty & if\ x\notin dom(f)
\end{array} \right. \Longrightarrow$$
$$g(x,x^*)-f(x)=\left\{\begin{array}{cl}
  h^*(x^*) & if\ x\in dom(f) \\
  -\infty & if\ x\notin dom(f),
\end{array} \right.$$ thus $f^g\equiv h^*$ and the classic duality
is recovered. (Later on, we will exhibit a more interesting $g\in
\mathcal{F}_f^{n,p}$, at least in the case of a restricted
problem.)\vspace{2ex}

\noindent {\bf Examples:} In the following examples, $f:\R
\rightarrow \R \cup \{+\infty\}$ and $g\in \mathcal{F}^{1,1}$.

\begin{enumerate}
\item[1)] $f(x)=x^2$ and $g(x,x^*)=(xx^*)^2$ ($C=\R$).
Calculate $f^g$:
$$f^g(x^*)=\sup_{x\in
\R}\{(xx^*)^2-x^2\}=\left\{\begin{array}{cc}
  0 & if\ |x^*|\leq 1 \\
  +\infty & if\ |x^*|> 1.
\end{array} \right.$$
Then $$g'(x,x^*)=f(x)+f^g(x^*)=\left\{\begin{array}{cc}
  x^2 & if\ |x^*|\leq 1 \\
  +\infty & if\ |x^*| >1
\end{array}\right.$$ and $g'\in \mathcal{F}_f^{1,1}$.
\item[2)] Take now $$f(x)=\left\{\begin{array}{cc}
  x^2 & if\ x\geq 0 \\
  +\infty & if\ x<0
\end{array}\right. $$ and $$g(x,x^*)=\left\{\begin{array}{cc}
   \frac{1}{xx^*+1} & if\ x,x^*\geq 0 \\
   0 & if\ x<0,\ x^*\geq 0\\
  +\infty & \mbox{otherwise.}
\end{array} \right.$$ $C=[0,+\infty[$,
$$f^g(x^*)=\sup_{x\in \R}\{g(x,x^*)-f(x)\}=
  \displaystyle \sup_{x\geq 0}\left\{\frac{1}{xx^*+1}-x^2\right\}, \ x^*\in C$$

$$\Longrightarrow f^g(x^*)= 1, \ x^*\in C.$$
Then $$g'(x,x^*)=f(x)+f^g(x^*)=\left\{\begin{array}{cc}
  x^2+1 & if\ x,x^*\geq 0 \\
  +\infty & \mbox{otherwise}
\end{array}\right.$$ and $g\notin \mathcal{F}_f^{1,1}$.
\item[3)] Let $f(x)=\exp(x)$ and $g(x,x^*)=\exp(x+x^*)$ ($C=\R$).
Then
$$f^g(x^*)=\sup_{x\in \R}
\{\exp(x)(\exp(x^*)-1)\}=\left\{\begin{array}{cc}
  0 & if\ x^*\leq 0 \\
  +\infty & x^*>0.
\end{array}\right.$$ And $$g'(x,x^*)=f(x)+f^g(x^*)=\left\{
\begin{array}{cc}
  \exp(x) & if\ x^*\leq 0 \\
  +\infty & if\ x^*> 0.
\end{array}\right.$$ Thus $g\in \mathcal{F}_f^{1,1}._{\Box}$
\end{enumerate}

Now, take two particular $g\in \mathcal{F}^{n,n}$ which have some
good properties (see \cite{RUB}).

Firstly, consider $f\in\mathcal{F}^{n}$ such that $dom(f)\subset
\R^n_+$ then take $g\in \mathcal{F}^{n,n}$ defined as follows:
$$g(x,x^*):=\left\{\begin{array}{cc}
  \langle x^*,x\rangle^+ & x\in \R^n_+,\ x^*\in \R^n_+\\
  0 & x\notin \R^n_+,\ x^*\in \R^n_+ \\
  +\infty & x^*\notin \R^n_+,
\end{array}\right.$$ where $\displaystyle\langle x^*,x\rangle^+=
\max_{i=1,\dots,n}\{x_i^*x_i\}$ (here $C=\R^n_+$). It is easy to
prove that this function belongs to $\mathcal{F}^{n,n}_f$ and also
satisfies the following:\begin{enumerate}
\item[$(*)$] $g(\cdot,x^*):\R^n_+\rightarrow \R$ and
$g(x,\cdot):\R^n_+\rightarrow \R$ are convex, l.s.c. and IPH
(increasing positively homogeneous) functions (with $x_1\leq x_2$
if and only if $x_{1i}\leq x_{2i}$ for all $i=1,\ldots,n$) for
every $x^*,x \in \R^n_+$.
\end{enumerate}
Consider now the set of functions $H_g$ defined as:
$$H_g:=\{h:\R^n\rightarrow \R\mbox{ such that }\exists
(x^*,\lambda)\in \R^n_+\times \R \mbox{ with
}h(x)=g(x,x^*)-\lambda\}.$$ With this notation, define the support
set of $f$ with respect to $H_g$, $supp(f,H_g):$
$$supp(f,H_g):=\{h\in H_g:\ h(x)\leq f(x)\ \forall x\in
\R^n\}.$$ It is already proven (see \cite{RUB}) that, since every
$h\in H_g$ is linked to one and only one $(x^*,\lambda)\in
\R^n_+\times \R$, we have
$$supp(f,H_g)=epi(f^g).$$ Let us see what properties are satisfied by $f^{gg}$ and
$supp(f,H_g)$.
\begin{enumerate}
\item[$\bullet$] $\displaystyle f^{gg}(x)=\sup_{x^*\in \R^n_+}
[g(x,x^*)-f^g(x^*)] \Longrightarrow$
$$f^{gg}(x)=\left\{\begin{array}{cc}
  \displaystyle\sup_{x^*\in \R^n_+}[\langle x^*,x\rangle^+
  -f^g(x^*)] & x\in \R^n_+, \\
  \displaystyle\inf_{y\in \R^n} f(y) & x\notin \R^n_+,
\end{array}\right.$$ $\left(\mbox{recall that
}\displaystyle\inf_{y\in \R^n} f(y) =\sup_{x^*\in
\R^n_+}[-f^g(x^*)]\right)$. Since the functions $\langle
x^*,\cdot\rangle^+-f^g(x^*)$ are increasing convex and l.s.c. for
every $x^*\in \R^n_+$, then $f^{gg}$ is increasing convex and
l.s.c. on $\R^n_+$.

\item[$\bullet$]$\displaystyle
f^g(x^*)=\sup_{x\in \R^n}[g(x,x^*)-f(x)]=\sup_{x\in dom(f)}
[\langle x^*,x\rangle^+ -f(x)]$ as before, the functions $\langle
\cdot,x\rangle^+ -f(x)$ are convex increasing and l.s.c for every
$x\in dom(f)\subset \R^n_+$, therefore $f^g$ is a convex
increasing and l.s.c. function. With this, one has that
$supp(f,H_g)=epi(f^g)$ is closed, convex and if $f$ is
non-negative then for every $(x^*,\lambda)\in supp(f,H_g)$,
$t(x^*,\lambda)\in supp(f,H_g)$ for all $t\in [0,1]$.
\end{enumerate}

Thanks to \cite{RUB} another interesting property can also be
shown. Since $\R^n_+$ is a convex closed cone, one has that for
all $x\in \R^n_+$,
$$\partial f^{gg}_{x_i}(1)\neq \emptyset,\ \forall i=1,\ldots,n$$
where $x_i$ is a vector with its components equal to zero except
the i-th component which is equal to the i-th component of $x$ and
$f^{gg}_{x_i}:[0,+\infty]\rightarrow \R_{+\infty}$ is defined by
$$f^{gg}_{x_i}(t):=f^{gg}(tx_i),\ \forall t\in [0,+\infty].$$
Secondly, if we consider
$$g(x,x^*):=\left\{\begin{array}{cc}
  \langle x^*,x\rangle^- & x\in \R^n_+,\ x^*\in \R^n_+\\
  0 & x\notin \R^n_+,\ x^*\in \R^n_+ \\
  +\infty & x^*\notin \R^n_+,
\end{array}\right.$$ where $\displaystyle\langle x^*,x\rangle^-=
\min_{i\in I_+(x^*)}\{x_i^*x_i\}$, $I_+(x^*)=\{i:l_i>0\}$ (here
also $C=\R^n_+$), then $f^{gg}$ is a l.s.c. increasing
convex-along-rays (ICAR) function (\cite{RUB}) and we can find in
chapter 3, section 3 of \cite{RUB} several results about this kind
of function including a condition which guarantees that in some
points, the general sub-differential of the function $f^{gg}$ is
nonempty. \vspace{2ex}

\noindent{\bf Remark: } The previous results are valid for every
$g \in \mathcal{F}^{n,n}_f$ which satisfies $(*)$.

\subsection{Particular Case: Classical Lagrangian Duality}
Let $$(P):\ \min_{x\in A}f(x)$$ be a typical minimization problem,
where
$$A:=\{x\in \R^n:\ h_i(x)\leq 0,\ \forall i=1,\ldots,m\},$$
$$f:A\rightarrow \R,\ h_i:\R^n\rightarrow \R,\mbox{
are convex l.s.c functions with }i=1,\ldots,m.$$ Remember that
(see \cite{J.P.SOSA.OC}) the following is the well known dual
problem:
$$(D_L):\ \min_{\lambda^*\geq 0}\sup_{x\in A}\{\langle
\lambda^*,-h(x)\rangle -f(x)\},$$ $h(x)=(h_1(x),\ldots, h_m(x))$.
Moreover, $x_0$ is a solution of $(P)$ and $\lambda^*_0$ is a
solution of $(D_L)$ if and only if $(x_0,\lambda^*_0)$ is a saddle
point of the Lagrangian function $L$, given by
$$L(x,\lambda^*):=f(x)+\langle \lambda^*,h(x)\rangle,$$ which
means,
$$L(x_0,\lambda^*)\leq L(x_0,\lambda_0)\leq L(x,\lambda^*_0),\
\forall x\in A,\ \forall \lambda^*\geq 0.$$ Define now $g_1:\R^m
\times \R^m \rightarrow \overline{\R}$, as follows:
$$g_1(y,\lambda^*)=\left\{\begin{array}{cl}
  \langle \lambda^*,y\rangle & if\ y\geq 0,\ \lambda^*\geq 0 \\
  0 & if\ y\ngeq 0,\ \lambda^*\geq 0\\
  +\infty & if\ \lambda^*\ngeqslant 0.
\end{array} \right.$$
It can be seen that $g_1\in \mathcal{F}^{m,m}$, with $C=\R^m_+$.
Let
$$g(x,\lambda^*)=g_1(-h(x),\lambda^*),$$ then $g\in
\mathcal{F}^{n,m}$. Calculating $\overline{f}^g(\lambda^*)$ with
$\lambda^*\in C$ and
$$\overline{f}(x)=\left\{\begin{array}{cl}
  f(x) & if\ x\in A \\
  +\infty & if\ x\notin A.
\end{array}\right.$$
$$\mbox{Then }\overline{f}^g(\lambda^*)=\sup_{x}
\{g(x,\lambda^*)-\overline{f}(x)\}=\sup_{x\in A} \{ \langle
\lambda^* ,-h(x)\rangle -f(x)\},$$ which means
$$\overline{f}^g(\lambda^*)= \sup_{x\in A}\{\langle \lambda^*
,-h(x)\rangle -f(x)\},\mbox{ for every } \lambda^*\in C=\R^m_+$$
and thus the classical Lagrangian duality is recovered.

\section{The Equilibrium Problem}
Let $f:K\times \R^n \rightarrow \overline{\R}$, where $K\subset
\R^n$ is a non-empty closed convex set, be such that
\begin{enumerate}
\item[i)] $f(x,\cdot):\R^n\rightarrow \overline{\R}$ is
a convex l.s.c. function for every $x\in K$.
\item[ii)] $f(x,x)=0,\ \forall x\in K$.
\end{enumerate}
The Equilibrium Problem is defined as follows:
$$(EP):\mbox{ Find }x\in K \mbox{ such that }f(x,y)\geq 0,
\forall y\in K.$$ Pseudo-monotone functions, which are defined as
follows, are consider in \cite{SOSA}. A function $\varphi:\R^n
\times \R^n \rightarrow \overline{\R}$ would be called
pseudo-monotone if for every $x,y\in \R^n$ with $\varphi(x,y)\geq
0$, we have $\varphi(y,x)\leq 0$.

\begin{proposition} Let $g\in \mathcal{F}^{n,n}$ and $C\subset
\R^n$ be the non-empty closed set linked to $g$. If $g$ is
pseudo-monotone, then $g(x,y)=0,\ \forall (x,y)\in C\times C$.
\end{proposition}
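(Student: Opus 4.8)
The plan is to exploit the two defining conditions of a G-coupling function — that $g$ is non-negative everywhere and that $\inf_{(x,x^*)\in\R^n\times C} g(x,x^*)=0$ — together with pseudo-monotonicity, which forces $g$ to be $\le 0$ on the "reflected" part of its domain. Since $g\ge 0$ always, the immediate consequence of pseudo-monotonicity is that for any $(x,y)$ with $g(x,y)\ge 0$ (which is every $(x,y)$ in the domain, because $g$ is non-negative) we get $g(y,x)\le 0$, hence $g(y,x)=0$. So the first step is simply to observe: for every $(x,y)\in\R^n\times C$ we have $g(x,y)\ge 0$, so pseudo-monotonicity gives $g(y,x)\le 0$; combined with non-negativity this yields $g(y,x)=0$.

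The subtlety is the bookkeeping on domains: $g(y,x)$ is finite (indeed zero) only when the \emph{second} argument lies in $C$, i.e.\ when $x\in C$. So from an arbitrary pair $(x,y)\in\R^n\times C$ I only immediately conclude $g(y,x)=0$ when additionally $x\in C$. Restricting to pairs $(x,y)\in C\times C$, the argument closes symmetrically: both $g(x,y)\ge 0$ and $g(y,x)\ge 0$ hold, pseudo-monotonicity applied to $(x,y)$ gives $g(y,x)\le 0$, hence $g(y,x)=0$; since $(x,y)$ ranges over all of $C\times C$ and the roles of $x,y$ are symmetric, we get $g(x,y)=0$ for all $(x,y)\in C\times C$. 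This is exactly the claimed conclusion, and condition (D2) is consistent with it precisely because the infimum $0$ is then attained on the diagonal-containing set $C\times C$.

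The one place that needs a word of care — and the only real "obstacle" — is making sure pseudo-monotonicity is being applied to points where $g$ is genuinely finite, since the hypothesis "$\varphi(x,y)\ge 0 \Rightarrow \varphi(y,x)\le 0$" is vacuous or delicate when infinities intervene. Here it causes no trouble: for $(x,y)\in C\times C$ both $g(x,y)$ and $g(y,x)$ are real numbers by (D1), so the implication applies cleanly. Thus the proof is essentially a two-line application of pseudo-monotonicity plus non-negativity, and the bulk of the writing is just spelling out that all relevant evaluations stay inside $\R^n\times C$ so that no $+\infty$ appears. No appeal to (D2), (D3), lower semicontinuity, or the conjugation machinery is needed; the statement is purely about the sign structure of $g$ on $C\times C$.
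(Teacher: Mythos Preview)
Your proposal is correct and follows essentially the same argument as the paper: for $(x,y)\in C\times C$ non-negativity gives $g(x,y)\geq 0$, pseudo-monotonicity then forces $g(y,x)\leq 0$, and non-negativity again yields $g(y,x)=0$. Your additional care about finiteness on $C\times C$ and the irrelevance of (D2) is accurate but goes slightly beyond what the paper spells out.
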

\begin{proof} Since $g\in \mathcal{F}^{n,n}$, then $\forall
(x,y)\in C\times C,\ g(x,y)\geq 0$ and $g(y,x)\geq 0$. Therefore
if $g$ is pseudo-monotone, it can be said that $g(y,x)\leq 0$ and
$g(y,x)=0,\ \forall (x,y)\in C\times C$.
\end{proof}

Observe that this proposition affirms that the only
pseudo-monotone G-coupling function is the null function.

Take now $m\in \N$ and $g\in \mathcal{F}^{n,m}$. Consider now for
every $x\in K$:
$$(P^x):\ \inf_{y\in \R^n} \overline{f}(x,y) \qquad \qquad
(D^x_g):\ \inf_{x^*\in C }\overline{f}_x^g(x^*)$$ where
$\overline{f}^g_x(x^*)=\displaystyle \sup_{y\in \R^n}
\{g(y,x^*)-\overline{f}(x,y)\}$ for every $x^*\in C$ ($C$ is the
non-empty closed set linked to $g$) and
$$\overline{f}(x,y) =\left\{\begin{array}{cc}
  f(x,y) & y\in K \\
  +\infty & y\notin K. \end{array} \right.$$

It would be interesting to know if there exist $m\in \N$ and a
$g\in \mathcal{F}^{n,m}$ which satisfy the \emph{Zero Duality Gap
Property} (ZDGP):
$$\inf_{y\in \R^n} \overline{f}(x,y)+ \inf_{x^*\in
C}\overline{f}^g_x(x^*)=0,\ \forall x\in F$$ where
$$F:=\left\{x\in K:\ \displaystyle\inf_{y\in K} f(x,y)\neq
-\infty\right\}.$$ (Notice that if $F$ is empty, $(EP)$ would have
no solutions.) If there exists such a $g$, the following are
equivalent: \begin{enumerate}
\item[$\bullet$] $\overline{x}$ is a solution of $(EP)$.
\item[$\bullet$] $\displaystyle \inf_{y\in \R^n}
\overline{f}(\overline{x},y)=0$.
\item[$\bullet$] There exists $\overline{x}\in K$ such that
$\displaystyle \inf_{x^*\in C}
\overline{f}^g_{\overline{x}}(x^*)=0.$
\end{enumerate}

\begin{lemma}\label{lemmexistZDGP}
There exists $g\in \mathcal{F}^{n,n}$ which satisfies the ZDGP.
\end{lemma}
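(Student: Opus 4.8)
\textbf{Proof plan for Lemma \ref{lemmexistZDGP}.}

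The plan is to construct $g \in \mathcal{F}^{n,n}$ explicitly, in the same spirit as the witnessing constructions used earlier (see the proofs that $\mathcal{F}_f^{n,m}$ is non-empty and that $f$ bounded below forces existence of $g$ with $f^g$ proper). The natural candidate is a $g$ that essentially ignores the dual variable in a way that collapses $\overline{f}^g_x$ to a constant determined by $\inf_{y} \overline{f}(x,y)$. Concretely, I would try
$$g(y,x^*)=\left\{\begin{array}{cl}
\|x^*\| & \mbox{if } y\in K \\
0 & \mbox{if } y\notin K,
\end{array}\right.$$
with $C=\R^n$, which is the same device that appeared before and lies in $\mathcal{F}^{n,n}$ (it is nonnegative, its domain is $\R^n\times\R^n$, and its infimum over $\R^n\times\R^n$ is $0$, attained on the complement of $K$ in the first slot).

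Next I would compute $\overline{f}^g_x(x^*)=\sup_{y\in\R^n}\{g(y,x^*)-\overline{f}(x,y)\}$. Since $\overline{f}(x,y)=+\infty$ for $y\notin K$, the supremum is over $y\in K$, where $g(y,x^*)=\|x^*\|$; hence $\overline{f}^g_x(x^*)=\|x^*\|-\inf_{y\in K} f(x,y)=\|x^*\|-\inf_{y\in\R^n}\overline{f}(x,y)$. Therefore $\inf_{x^*\in C}\overline{f}^g_x(x^*)$ is attained at $x^*=0$ and equals $-\inf_{y\in\R^n}\overline{f}(x,y)$, so that
$$\inf_{y\in\R^n}\overline{f}(x,y)+\inf_{x^*\in C}\overline{f}^g_x(x^*)=0$$
for every $x\in F$ (the restriction $x\in F$ is exactly what guarantees $\inf_{y\in K} f(x,y)$ is a genuine real number, so the cancellation is legitimate and does not run into an $\infty-\infty$ ambiguity). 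This establishes the ZDGP.

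The only genuine subtlety — and the step I would treat most carefully — is the bookkeeping at the boundary of definedness: for $x\notin F$ one has $\inf_{y}\overline{f}(x,y)=-\infty$ and correspondingly $\overline{f}^g_x\equiv+\infty$, but the ZDGP is only asserted on $F$, so this causes no trouble; and one should check that $g$ genuinely satisfies (D1)–(D2) with the claimed $C=\R^n$ rather than some smaller closed set. I do not expect to need (D3) here, since the statement of the lemma asks only for membership in $\mathcal{F}^{n,n}$ and the ZDGP. If a version satisfying (D3) were wanted, the same $g$ works, as $\|x^*\|$ and the constant $0$ are both convex and l.s.c. in $x^*$ on the convex closed set $C=\R^n$; I would remark on this in passing.
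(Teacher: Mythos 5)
Your proof is correct, but your witness is a genuinely different one from the paper's. You take the ``degenerate'' coupling $g(y,x^*)=\|x^*\|$ for $y\in K$ and $0$ for $y\notin K$ with $C=\R^n$, the same device used earlier to show $\mathcal{F}^{n,m}_f\neq\emptyset$; this collapses the dual objective to $\overline{f}^g_x(x^*)=\|x^*\|-\inf_{y\in K}f(x,y)$, whose infimum is visibly attained at $x^*=0$, and the cancellation for $x\in F$ is exactly as you say (your care about the $\infty-\infty$ issue, and the observation that only $x\in F$ is needed, is the right bookkeeping; the only cosmetic slip is that when $K=\R^n$ the infimum $0$ in (D2) is attained at $x^*=0$ rather than on the complement of $K$, which is then empty). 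The paper instead takes $g(y,x^*)=\langle x^*,y\rangle$ for $y\in K$, $x^*\in K^+$, with $C=K^+$ the dual cone, and proves the ZDGP by the two-sided estimate $\inf_{x^*\in C}\overline{f}^g_x(x^*)\leq\overline{f}^g_x(0)=-\inf_y\overline{f}(x,y)$ together with the Fenchel-type inequality $\overline{f}^g_x(x^*)+\overline{f}(x,y)\geq g(y,x^*)\geq 0$ giving the reverse bound. Both arguments are valid and of comparable length; what the paper's choice buys is that its $g$ produces a nontrivial dual problem that is reused verbatim in the Complementarity Problem subsection (where $\overline{f}^g_x$ carries real information about $T(x)\in K^+$ and $\langle T(x),x\rangle$), whereas your $g$ makes the dual problem trivially minimized at $x^*=0$ and so, while it settles the existence statement more cheaply, it would not support those later applications.
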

\begin{proof} Define $g\in \mathcal{F}^{n,n}$ by:
$$g(y,x^*)=\left\{\begin{array}{cc}
  \langle x^*,y\rangle & if\ x^*\in K^+,\ y\in K \\
  0 & if\ x^*\in K^+,\ y\notin K\\
  +\infty & x^*\notin K^+,
\end{array} \right.$$
where $K^+:=\{x^*\in \R^n:\langle x^*,x\rangle\geq 0,\ \forall
x\in K\}$, in this case $C=K^+$. Calculate
$\overline{f}_x^g(x^*)$, with $x\in F$, $x^*\in C$:
$$\overline{f}_x^g(x^*)=\sup_{y\in K}\{\langle x^*,y\rangle
-f(x,y)\}.$$ It is clear that $$\inf_{x^*\in
C}\overline{f}_x^g(x^*) \leq \overline{f}_x^g(0)=-\inf_{y\in \R^n}
\overline{f}(x,y)=-\inf_{y\in K }f(x,y),$$ but for every $x^*\in
C$, $y\in \R^n$ one has that
$\overline{f}_x^g(x^*)+\overline{f}(x,y)\geq 0$, which implies
that we always have $\displaystyle \inf_{x^*\in
C}\overline{f}_x^g(x^*) \geq -\inf_{y\in \R^n} \overline{f}(x,y)$.
Therefore
$$\inf_{x^*\in C}\overline{f}_x^g(x^*) =-\inf_{y\in \R^n}
\overline{f}(x,y) \Longrightarrow \inf_{x^*\in
C}\overline{f}_x^g(x^*)+ \inf_{y\in \R^n} \overline{f}(x,y)=0.$$
Finally, there exists $g\in \mathcal{F}^{n,n}$ which satisfies the
ZDGP.
\end{proof}

Let us give now another function $g\in \mathcal{F}^{n,n}$ which
also satisfies the ZDGP. In this case, this $g$ will generate a
duality scheme which has been already studied in \cite{JEML.SOSA}.

Let $i_K:\R^n\rightarrow \overline{\R}$ be defined by
$\displaystyle i_K(x^*):=\inf_{x\in K}\langle x^*,x\rangle$ and
$K^*:=\{x^*\in \R^n:\ i_K(x^*)>-\infty\}$, the effective domain of
$i_K$. (Since $i_K$ is a concave u.s.c function, then $K^*$ is a
closed convex set.) Define then:
$$g(y,x^*)=\left\{\begin{array}{cl}
  \langle x^*,y\rangle -i_K(x^*), & \mbox{if }x^*\in K^*,\ y\in
  K\\
  0 & \mbox{if }x^*\in K^*,\ y\notin K \\
  +\infty & \mbox{if }x^*\notin K^*,
\end{array}\right.$$ in this case $C=K^*$. Calculate now
$\overline{f}^g_x(x^*)$ for $x\in F$:
$$\begin{array}{ccl}
  \overline{f}^g_x(x^*) & = & \displaystyle \sup_{y\in \R^n}
  [g(y,x^*)- \overline{f}(x,y)]\\
   & = & \displaystyle \sup_{y\in K}[\langle x^*,y\rangle
   -i_K(x^*)-f(x,y)] \\
   & = & \displaystyle \sup_{y\in K}[\langle x^*,y\rangle
   -f(x,y)]-i_K(x^*),
\end{array}$$ but $\displaystyle \overline{f}^*_x(x^*)=\sup_{y\in
\R^n} [\langle x^*,y\rangle-\overline{f}(x,y)]=\sup_{y\in K}
[\langle x^*,y\rangle-f(x,y)]$, then
$$\overline{f}_x^g(x^*)=\overline{f}_x^*(x^*)-i_K(x^*).$$

\begin{proposition}
The function $g$ defined above, satisfies the ZDGP.
\end{proposition}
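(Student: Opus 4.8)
The claim is that the function $g(y,x^*)=\langle x^*,y\rangle-i_K(x^*)$ (with the standard completion off $K^*$) satisfies the ZDGP, that is
$$\inf_{y\in\R^n}\overline{f}(x,y)+\inf_{x^*\in C}\overline{f}^g_x(x^*)=0\qquad\forall x\in F.$$
We already computed $\overline{f}^g_x(x^*)=\overline{f}^*_x(x^*)-i_K(x^*)$, so the left-hand side equals $\inf_{y\in K}f(x,y)+\inf_{x^*\in K^*}\bigl[\overline{f}^*_x(x^*)-i_K(x^*)\bigr]$. Since $\inf_{y\in K}f(x,y)=-\sup_{y\in K}(-f(x,y))=-\overline{f}^*_x(0)$, and since (as in Lemma \ref{lemmexistZDGP}) for every $x^*\in K^*$ and every $y\in\R^n$ one has $\overline{f}^g_x(x^*)+\overline{f}(x,y)\ge g(y,x^*)\ge 0$, it follows immediately that $\inf_{x^*\in C}\overline{f}^g_x(x^*)\ge -\inf_{y\in\R^n}\overline{f}(x,y)$. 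So the whole content of the proposition is the reverse inequality, namely that taking $x^*=0$ (which lies in $K^*$ since $i_K(0)=0>-\infty$) already realises the supremum, i.e.
$$\overline{f}^g_x(0)=\overline{f}^*_x(0)-i_K(0)=\overline{f}^*_x(0)=-\inf_{y\in K}f(x,y).$$

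**Key steps, in order.** First I would note $0\in K^*=C$, because $i_K(0)=\inf_{x\in K}0=0$ (here $K\neq\emptyset$ is used). Second, evaluate $g$ at $x^*=0$: $g(y,0)=\langle 0,y\rangle-i_K(0)=0$ for $y\in K$ and $g(y,0)=0$ for $y\notin K$, so $g(\cdot,0)\equiv 0$; hence $\overline{f}^g_x(0)=\sup_{y\in\R^n}\{0-\overline{f}(x,y)\}=-\inf_{y\in\R^n}\overline{f}(x,y)=-\inf_{y\in K}f(x,y)$, which is finite because $x\in F$. Third, therefore $\inf_{x^*\in C}\overline{f}^g_x(x^*)\le\overline{f}^g_x(0)=-\inf_{y\in K}f(x,y)$. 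Fourth, for the opposite direction use the Fenchel–Young-type inequality already available: for all $y\in\R^n$ and all $x^*\in C$, $g(y,x^*)-\overline{f}(x,y)\ge -\overline{f}(x,y)\cdot[\text{term that is}\ge 0]$ — more precisely, $\overline{f}^g_x(x^*)\ge g(y,x^*)-\overline{f}(x,y)$ and, taking $y$ such that $\overline{f}(x,y)$ is near its infimum together with $g(y,x^*)\ge 0$, one gets $\overline{f}^g_x(x^*)\ge -\inf_{y\in\R^n}\overline{f}(x,y)$ for every $x^*\in C$, hence $\inf_{x^*\in C}\overline{f}^g_x(x^*)\ge -\inf_{y\in\R^n}\overline{f}(x,y)$. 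Combining the third and fourth steps gives equality, and adding $\inf_{y\in\R^n}\overline{f}(x,y)$ to both sides yields the ZDGP identity for every $x\in F$.

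**Main obstacle.** There is essentially no deep obstacle; the proof is the same two-sided squeeze used in Lemma \ref{lemmexistZDGP}, and the only point requiring a moment's care is verifying that $0$ genuinely lies in the index set $C=K^*$ (so that the upper bound via $x^*=0$ is legitimate) and that $g(\cdot,0)$ is identically zero rather than merely zero on $K$ — both follow directly from $i_K(0)=0$ and the definition of $g$. One should also keep in mind that $\overline{f}^g_x$ may take the value $+\infty$ somewhere on $C$, but since the infimum over $C$ is bounded above by the finite number $\overline{f}^g_x(0)$, this causes no difficulty. A secondary, purely bookkeeping, point is that $i_K$ being concave and u.s.c. guarantees $K^*$ is closed and convex, so that $g\in\mathcal{F}^{n,n}$ in the first place and $C$ is an admissible linked set.
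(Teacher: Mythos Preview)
Your proposal is correct and follows exactly the approach the paper intends: the paper's own proof is simply ``Similar to Lemma~\ref{lemmexistZDGP}'', and your two-sided squeeze---evaluating at $x^*=0\in K^*$ (using $i_K(0)=0$) for the upper bound, and invoking $\overline f_x^g(x^*)+\overline f(x,y)\ge g(y,x^*)\ge 0$ for the lower bound---is precisely that argument, carried out with the appropriate verifications for the new $g$.
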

\begin{proof} Similar to Lemma \ref{lemmexistZDGP}.
\end{proof}

In \cite{JEML.SOSA} a very interesting result can be found.

\begin{theorem}\label{jemlws}
$\overline{x}$ is a solution of $(EP)$ if and only if, there
exists $x^*\in K^*$ such that
$\overline{f}_{\overline{x}}^*(x^*)-i_K(x^*)=0$.
\end{theorem}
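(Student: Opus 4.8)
The plan is to derive Theorem~\ref{jemlws} as a corollary of the ZDGP established for the specific coupling function $g(y,x^*)=\langle x^*,y\rangle - i_K(x^*)$ (on $C=K^*$), together with the chain of equivalences listed just after the definition of the ZDGP. First I would recall that for every $x\in F$ we computed $\overline{f}_x^g(x^*)=\overline{f}_x^*(x^*)-i_K(x^*)$, and by the Proposition immediately preceding this theorem, this $g$ satisfies the ZDGP, i.e. $\inf_{y\in\R^n}\overline{f}(x,y)+\inf_{x^*\in K^*}\overline{f}_x^g(x^*)=0$ for all $x\in F$. Hence for $x\in F$ the three bullet equivalences apply, and in particular: $\overline{x}$ solves $(EP)$ $\iff$ $\inf_{y\in\R^n}\overline{f}(\overline{x},y)=0$ $\iff$ there exists $x^*\in K^*$ with $\inf_{x^*\in K^*}\overline{f}_{\overline{x}}^g(x^*)=0$, the last being $\overline{f}_{\overline{x}}^*(x^*)-i_K(x^*)=0$ for some $x^*\in K^*$.

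Next I would handle the two routine directions carefully. For the ``only if'' direction: if $\overline{x}$ solves $(EP)$ then $f(\overline{x},y)\ge 0$ for all $y\in K$, and since $f(\overline{x},\overline{x})=0$ the infimum $\inf_{y\in K}f(\overline{x},y)=0$; in particular $\overline{x}\in F$, so the ZDGP machinery applies and yields $x^*\in K^*$ with $\overline{f}_{\overline{x}}^*(x^*)-i_K(x^*)=0$. For the ``if'' direction: suppose $x^*\in K^*$ satisfies $\overline{f}_{\overline{x}}^*(x^*)-i_K(x^*)=0$. We always have $\overline{f}_x^g(x^*)+\overline{f}(x,y)\ge g(y,x^*)\ge 0$ for all $y$, so $\inf_{x^*\in K^*}\overline{f}_{\overline{x}}^g(x^*)\ge -\inf_{y}\overline{f}(\overline{x},y)$, i.e. $\inf_{y}\overline{f}(\overline{x},y)\ge -(\overline{f}_{\overline{x}}^*(x^*)-i_K(x^*))=0$; combined with $f(\overline{x},\overline{x})=0$ which forces $\inf_y\overline{f}(\overline{x},y)\le 0$, we get $\inf_{y\in\R^n}\overline{f}(\overline{x},y)=0$, i.e. $f(\overline{x},y)\ge 0$ for all $y\in K$, so $\overline{x}$ solves $(EP)$.

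The one point that needs genuine care — and which I expect to be the main obstacle — is the membership $\overline{x}\in F$ in the ``only if'' direction, since the chain of bullet equivalences was only asserted for $x\in F$; one must check that a solution of $(EP)$ automatically has $\inf_{y\in K}f(\overline{x},y)\neq -\infty$, which follows immediately from $f(\overline{x},y)\ge 0$. A secondary subtlety is ensuring $i_K(x^*)$ is finite wherever it is used, but this is exactly the defining property of $K^*$, so quoting $x^*\in K^*$ suffices. In fact, because both directions reduce so directly to the computation $\overline{f}_x^g(x^*)=\overline{f}_x^*(x^*)-i_K(x^*)$ and the ZDGP Proposition, the cleanest write-up is simply: ``This is the third bullet of the list of equivalences following the ZDGP, applied to the function $g$ of the preceding Proposition, using $\overline{f}_x^g(x^*)=\overline{f}_x^*(x^*)-i_K(x^*)$; note that a solution of $(EP)$ lies in $F$.''
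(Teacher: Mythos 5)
There is a genuine gap in your ``only if'' direction, and it is precisely the point the paper flags in the sentence immediately preceding the theorem: ``This result does not only say that $\inf_{x^*\in K^*}\overline{f}^g_{\overline{x}}(x^*)=0$, but that the dual problem $(D_g^{\overline{x}})$ has a solution.'' The ZDGP for this $g$, i.e.\ the third bullet of the chain of equivalences, only gives you
$\inf_{x^*\in K^*}\bigl(\overline{f}_{\overline{x}}^*(x^*)-i_K(x^*)\bigr)=0$; since the preceding lemma shows $\overline{f}_{\overline{x}}^*(x^*)-i_K(x^*)\geq 0$ for all $x^*\in K^*$, an infimum equal to zero does not by itself produce an $x^*$ at which the value zero is \emph{attained}. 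Your sentence ``the ZDGP machinery applies and yields $x^*\in K^*$ with $\overline{f}_{\overline{x}}^*(x^*)-i_K(x^*)=0$'' silently converts ``$\inf=0$'' into ``the infimum is attained,'' which is exactly the non-trivial content of the theorem and cannot be obtained from the ZDGP alone. (Your identified ``main obstacle,'' namely checking $\overline{x}\in F$, is indeed trivial; the real obstacle is the attainment.)

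The paper closes this gap with a convex-analytic construction: since $\overline{x}$ minimizes $f(\overline{x},\cdot)$ over $K$ with value $0$, the optimality condition yields an
$x^*\in \partial f_{\overline{x}}(\overline{x})\cap(-N_K(\overline{x}))$; the subgradient inequality gives the Fenchel equality $f_{\overline{x}}(\overline{x})+f^*_{\overline{x}}(x^*)=\langle x^*,\overline{x}\rangle$, hence $f^*_{\overline{x}}(x^*)=\langle x^*,\overline{x}\rangle$, while $-x^*\in N_K(\overline{x})$ forces $\langle x^*,\overline{x}\rangle=\inf_{x\in K}\langle x^*,x\rangle=i_K(x^*)$. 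Combining, $\overline{f}^*_{\overline{x}}(x^*)\leq f^*_{\overline{x}}(x^*)=i_K(x^*)$, and the reverse inequality from the preceding lemma gives equality at this explicit $x^*$. Your ``if'' direction, on the other hand, is correct and essentially the same Fenchel-inequality argument as the paper's; only the ``only if'' direction needs to be replaced by an argument that actually exhibits a dual solution.
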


This result does not only say that $\displaystyle \inf_{x^*\in
K^*}\overline{f}^g_{\overline{x}}(x^*)=0$, but that the dual
problem $(D_g^{\overline{x}})$ has a solution. In order to prove
this, we need first the following lemma.

\begin{lemma} For every $x^*\in K^*$ and $x\in F$, one has:
$$\overline{f}_x^*(x^*)-i_K(x^*)\geq 0.$$
\end{lemma}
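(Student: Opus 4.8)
For every $x^*\in K^*$ and $x\in F$, $\overline{f}_x^*(x^*)-i_K(x^*)\geq 0$.

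The plan is to unwind the definitions and exploit the hypothesis $f(x,x)=0$ for all $x\in K$. First I would recall that $\overline{f}_x^*(x^*)=\sup_{y\in K}[\langle x^*,y\rangle - f(x,y)]$, since $\overline{f}(x,\cdot)$ equals $f(x,\cdot)$ on $K$ and $+\infty$ off $K$. Thus for any particular choice of $y\in K$ we get the lower bound $\overline{f}_x^*(x^*)\geq \langle x^*,y\rangle - f(x,y)$. The natural test point is $y=x$ itself: since $x\in F\subset K$, we have $x\in K$, and by property (ii) of $f$ (namely $f(x,x)=0$), this yields $\overline{f}_x^*(x^*)\geq \langle x^*,x\rangle - f(x,x)=\langle x^*,x\rangle$.

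Next I would compare this with $i_K(x^*)$. By definition $i_K(x^*)=\inf_{z\in K}\langle x^*,z\rangle$, and since $x\in K$ this infimum satisfies $i_K(x^*)\leq \langle x^*,x\rangle$. Combining the two displayed inequalities gives $\overline{f}_x^*(x^*)\geq \langle x^*,x\rangle \geq i_K(x^*)$, that is, $\overline{f}_x^*(x^*)-i_K(x^*)\geq 0$, as desired. One should note that $x^*\in K^*$ guarantees $i_K(x^*)>-\infty$, so the subtraction is well defined and no $+\infty-\infty$ ambiguity arises; also $x\in F$ ensures $\inf_{y\in K}f(x,y)\neq-\infty$, though in fact the argument only uses $x\in K$ and $f(x,x)=0$.

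There is essentially no obstacle here: the only subtlety worth a sentence is confirming that $\overline{f}_x^*(x^*)$ is not $-\infty$ (which would make the claim vacuous or ill-posed), and this is precisely what the test point $y=x$ delivers, bounding it below by the finite quantity $\langle x^*,x\rangle$. So the lemma follows immediately from feasibility of $x$ together with the equilibrium normalization $f(x,x)=0$ and the definition of $i_K$.
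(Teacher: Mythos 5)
Your proof is correct and is essentially the paper's argument: both rest on the definition of $\overline{f}_x^*$ as a supremum (Fenchel's inequality), the normalization $f(x,x)=0$, and the definition of $i_K$ as an infimum over $K$. The only cosmetic difference is that you evaluate Fenchel's inequality at the single test point $y=x$ and then bound $i_K(x^*)\leq\langle x^*,x\rangle$, whereas the paper infimizes the inequality over all $y\in K$ and uses $\inf_{y\in K}f(x,y)\leq f(x,x)=0$; the two chains are logically interchangeable.
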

\begin{proof} From Fenchel's inequality, we have for every $x\in F$
fixed
$$\langle x^*,y\rangle-\overline{f}^*_x(x^*)\leq
\overline{f}(x,y),\ \forall x^*\in K^*,\ \forall y\in \R^n.$$
Then, taking $\displaystyle \inf_{y\in K}$:
$$i_K(x^*)-\overline{f}^*_x(x^*)\leq\inf_{y\in K}f(x,y)\leq 0,$$
where the last inequality occurs, since $\displaystyle \inf_{y\in
K}f(x,y)\leq f(x,x)=0$. This means,
$\overline{f}_x^*(x^*)-i_K(x^*)\geq 0$.
\end{proof}

\begin{proof}
{\bf of Theorem \ref{jemlws}:} \begin{enumerate}
\item[$\bullet$] If $\overline{x}$ is a solution of $(EP)$,
then $$\overline{f}_{\overline{x}}(\overline{x})=
f_{\overline{x}}(\overline{x})= 0= \min_{y\in
K}\overline{f}(\overline{x},y)=\min_{y\in K}f(\overline{x},y).$$
Then, there exists (see \cite{ROCK}) $x^*\in \partial
f_{\overline{x}}(\overline{x})\cap (-N_K(\overline{x}))$ (where
$N_K(\overline{x})$ stands for the normal cone of $K$ at
$\overline{x}$) and thus
$$\overline{f}^*_{\overline{x}}(x^*)\leq
f^*_{\overline{x}}(x^*)=f_{\overline{x}}(\overline{x})+
f^*_{\overline{x}}(x^*)=\langle x^*,\overline{x}\rangle
=i_K(x^*),$$ which means
$\overline{f}^*_{\overline{x}}(x^*)-i_K(x^*)\leq 0$. But thanks to
the previous lemma, this implies that
$\overline{f}^*_{\overline{x}}(x^*)-i_K(x^*)=0$.
\item[$\bullet$] Take $y\in K$ and suppose that there
exists $x^*\in K^*$ such that
$\overline{f}^*_{\overline{x}}(x^*)=i_K(x^*)$, then
$$f(\overline{x},y)=\overline{f}(\overline{x},y)\geq \langle
x^*,y\rangle -\overline{f}^*_{\overline{x}}(x^*)= \langle
x^*,y\rangle -i_K(x^*)\geq 0.$$ And thus, $\overline{x}$ is a
solution of $(EP)$.
\end{enumerate}
\end{proof}

\subsection{Particular Case: The Complementarity Problem}
A particular case of the Equilibrium Problem is the
Complementarity Problem, which is defined as follows:
$$(CP):\mbox{ Find }x\in K\mbox{ such that }T(x)\in K^+\mbox{ and
}\langle T(x),x \rangle=0,$$ where $K\subset \R^n$ is a closed
convex cone and $ T:K\rightarrow \R^n$ is a continuous function.
Considering $f(x,y)=\langle T(x),y-x \rangle$ with $x$ and $y$ in
$K$, the solution set of the $(CP)$ is equal to the solution set
of $(EP)$ related to $f$.

Let us take $g\in \mathcal{F}^{n,n}$ defined as in the beginning
of this section:
$$g(y,x^*)=\left\{\begin{array}{cl}
  \langle x^*,y\rangle & if\ x^*\in K^+,\ y\in K \\
  0 & if\ x^*\in K^+,\ y\notin K\\
  +\infty & x^*\notin K^+.
\end{array} \right.$$ Calculate $\overline{f}^g_x(x^*)$
($x\in K,\ x^*\in C=K^+$):
$$
\overline{f}^g_x(x^*)=\sup_{y\in
\R^n}\{g(y,x^*)-\overline{f}(x,y)\}= \sup_{y\in
K}\{g(y,x^*)-f(x,y)\}$$ $$\Rightarrow
\overline{f}^g_x(x^*)=\sup_{y\in K}\{\langle
  x^*-T(x),y\rangle\}+\langle T(x),x \rangle$$
$$\Rightarrow
\overline{f}^g_x(x^*)=\left\{\begin{array}{cc}
  \langle T(x),x\rangle  & x^*-T(x)\in
  K^- \\
  +\infty & \mbox{otherwise.}
\end{array}\right.$$
But $x^*-T(x)\in K^-$ is equivalent to the statement that $$x\in
T^{-1}(x^*+K^+)\subset T^{-1}(K^+),$$ and this inclusion is true
since $K$ is a closed convex cone and $x^*\in K^+$. Then

$$\overline{f}^g_x(x^*)=\left\{\begin{array}{cc}
  \langle T(x),x\rangle  & x\in
T^{-1}(x^*+K^+) \\
  +\infty & \mbox{otherwise.}
\end{array}\right.$$
Therefore,
$$\overline{f}(x,y)+\overline{f}^g_x(x^*)=\left\{
\begin{array}{cc}
  \langle T(x),y\rangle & if\ x\in T^{-1}(x^*+K^+),\
  y\in K \\
  +\infty & \mbox{otherwise.}
\end{array}\right.$$

Calculate now the set $F$:
\begin{enumerate}
\item[$\bullet$] If $x\in K$ is such that $T(x)\in K^+$ then
$$\inf_{y\in K}f(x,y)=\inf_{y\in K}\langle
T(x),y-x\rangle=-\langle T(x),x\rangle\neq -\infty,$$ which means
$x\in F$.
\item[$\bullet$] If $x\in K$ is such that $T(x)\notin K^+$ then
there exists $y_0\in K$ satisfying $\langle T(x),y_0\rangle<0$.
Thus $$\lim_{n\rightarrow +\infty}f(x,ny_0)=-\infty=\inf_{y\in
K}f(x,y),$$ which means $x\notin F$.
\end{enumerate} All these imply that $F=T^{-1}(K^+)$.\\
\\ Then, given $x_0\in F$, there exists $x^*\in K^+$ (for example
$x^*=0$) such that\\ $x_0\in T^{-1}(x^*+K^+)$ and:
$$\inf_{y\in \R^n}\overline{f}(x_0,y)+\inf_{x^*\in
C}\overline{f}^g_{x_0}(x^*)=0,$$ but $x_0\in F$ was chosen
arbitrarily, therefore
$$\inf_{y\in \R^n}
\overline{f}(x,y)+\inf_{x^*\in C}\overline{f}^g_{x}(x^*)=0,\
\forall x\in F.$$ Finally, there exists $\overline{x}\in K$ such
that
\begin{equation}\label{nec&suf cond for (CP)}\inf_{x^*\in
C}\overline{f}^g_{\overline{x}}(x^*)=0 \Longleftrightarrow
T(\overline{x})\in K^+\mbox{ and }\langle
T(\overline{x}),\overline{x}\rangle =0. \end{equation}

In \cite{COTTLE} the $(CP)$ is considered, when $K=K^+=\R^n_+$ and
$T$ is an affine operator, in other words, the case of the Linear
Complementarity Problem $(LCP)$. For studying this problem, they
propose the following:\\ \\
$\overline{x}$ is a solution of $(LCP)$, if and only if
$\overline{x}$ satisfies:
$$ \overline{x}\in \R^n_+,\ T(\overline{x})\in \R^n_+,\mbox{ and }\langle
T(\overline{x}),\overline{x}\rangle =0.$$ It is immediate to see
that this proposal is identical to (\ref{nec&suf cond for (CP)}),
therefore by using this $g\in \mathcal{F}^{n,n}$ (the one used at
the beginning of this section) we generate a dual problem of
$(LCP)$ which has been treated in other works.

\section*{Conclusions}
This work gives a basis for a new theory, which we called
G-coupling functions. Logically there are many things to explore,
by example:
\begin{enumerate}
\item[$\bullet$] Using G-coupling functions for a Perturbation
Theory.
\item[$\bullet$] Using G-coupling functions for generating primal,
dual and primal-dual algorithms.
\item[$\bullet$] Analyze, using G-coupling functions, the
Variational Inequality Problem for the case of non-monotone
operators.
\item[$\bullet$] Given $f$ look for a G-coupling function such that $f$ is abstract
convex with respect to the class of elementary function induced
G-coupling function (see \cite{RUB}).
\end{enumerate}

{\bf Acknowledgement}: We would like to express our deep thanks to
Dr. David Yost for all his support and suggestions. We also thank
to an anonymous referee for his valuable help to correct the
paper.

\end{document}